\documentclass[a4paper]{article}
\usepackage{amssymb,amsthm,amsmath,graphicx,amsfonts}
\usepackage{mathrsfs}
\usepackage{multirow}
\usepackage{graphicx,epsfig}
\usepackage{color}
\usepackage{enumerate,enumitem}
\usepackage{empheq}
\usepackage{cases}
\usepackage{cite}
\usepackage{mathrsfs}
\usepackage{amsgen}
\usepackage{amscd}
\usepackage[ruled,vlined]{algorithm2e}
\usepackage{float}
\usepackage{booktabs}
\usepackage{amsmath}
\usepackage{latexsym}
\usepackage{amssymb,amsmath,graphicx,amsfonts,euscript}
\usepackage{amsfonts,bm,hyperref,subcaption}
\usepackage{enumerate}
\newtheorem{remark}{\bf Remark}[section]

\topmargin \dimexpr -0.11in
\allowdisplaybreaks

\newtheorem{Theorem}{Theorem}[section]

\newtheorem{lemma}[Theorem]{Lemma}

\def\d{{\mathrm d}}

\def\le{\leqslant}
\def\ge{\geqslant}
\def\Omega{\varOmega}
\def\Delta{\varDelta}
\def\bex{\begin{exercise}\upshape}
\def\eex{\end{exercise}}

\newcommand{\Bw}{{\bm{w}}}

\usepackage{geometry}

\title{An inertial minimal-deformation-rate framework for shape optimization\thanks{This work is supported in part by National Natural Science Foundation of China under grants (No. 12525111, No. 12494550, No. 12494555 and No. 12401534).
}} 
\author{Falai Chen\thanks{School of Mathematics, University of Science and Technology of China. 
E-mail address: chenfl@ustc.edu.cn  
}
\and Buyang Li\thanks{Department of Applied Mathematics, The Hong Kong Polytechnic University, Hong Kong. 
E-mail address: buyang.li@polyu.edu.hk, jiajienumermath.li@polyu.edu.hk, claire.tang@polyu.edu.hk }
\and Jiajie Li$^\ddagger$
\and Rong Tang$^\ddagger$
}

\begin{document}

\maketitle


\begin{abstract}
We propose a robust numerical framework for PDE--constrained shape optimization and Willmore-driven surface hole filling. 
To address two central challenges—slow progress in flat energy landscapes, which can trigger premature stagnation at suboptimal configurations, and mesh deterioration during geometric evolution—we couple a second--order inertial flow with a minimal--deformation--rate (MDR) mesh motion strategy. 
This coupling accelerates convergence while preserving mesh quality and thus avoids remeshing. 
To further enhance robustness for non-smooth or non-convex initial geometries, we incorporate surface--diffusion regularization within the Barrett--Garcke--N\"urnberg (BGN) framework. 
Moreover, we extend the inertial MDR methodology to Willmore-type surface hole filling, enabling high-order smooth reconstructions even from incompatible initial data. 
Numerical experiments demonstrate markedly faster convergence to lower original objective values, together with consistently superior mesh preservation throughout the evolution.

\end{abstract}

\section{Introduction}

Shape optimization constrained by partial differential equations (PDEs) is a fundamental problem in computational science and engineering, with applications ranging from aerodynamics~\cite{MohammadiPironneau2010} to structural design~\cite{AllaireDapognyJouve2021,HaugChoiKomkov1986}. The central topic is to determine a domain \(\Omega\subset\mathbb{R}^d\) (\(d\in\{2,3\}\)) with boundary \(\Gamma:=\partial\Omega\) that minimizes a original objective functional \(J(\Gamma)=\int_{\Omega} j(x,u)\), subject to a PDE constraint~\cite{DelZol,SokolowskiZolesio1992,HenrotPierre2018,HaslingerMakinen2003,HaslingerNeittaanmaki1996}. A prototypical example is the Poisson equation--constrained shape design problem:
\begin{equation}\label{shapeOptPoisson}
\min_{\Omega\in \mathcal{O}_{\rm ad}} J(\Gamma)=\int_\Omega \frac{1}{2}|u-u_d|^2
\quad \text{subject to}\quad 
\left\{
\begin{aligned}
-\Delta u + u & = f && \text{in } \Omega,\\
u&=0 && \text{on } \Gamma,
\end{aligned}
\right.
\end{equation}
where \(u_d\) denotes prescribed observation data and \(\mathcal{O}_{\rm ad}\) is the admissible set of Lipschitz domains. The practical relevance of such problems has stimulated extensive research on numerical methods for evolving geometries~\cite{Burman2017,Guo2020,Dede2012,Wang2018,Morin2012,Eppler2006} as well as rigorous convergence analyses for time-dependent domains~\cite{Ma2022,Boffi2004,Badia2006,ElliottRanner2021,Edelmann2022,Gastaldi2001,Nobile2001,ElliottStyles2012,ElliottVenkataraman2015,KovacsGuerra2018,LiXiaYang2023,GongLiRao2024}.

A standard solution strategy relies on shape gradient flows~\cite{Burger2003,Gournay2006,DoganMorinNochettoVerani2007,GongZhu2021}, in which the descent velocity \(\tilde{\bm w}\) is defined via the Riesz representation in a Hilbert space \(\mathbb{H}\) by
\begin{align}\label{descentVelocity}
  b(\tilde{\bm w}, \bm v) = -\,\mathrm{d}J(\Gamma; \bm v)
  \quad \forall\,\bm v \in \mathbb{H},
\end{align}
where \(b(\cdot,\cdot)\) denotes the inner product on \(\mathbb{H}\) and \(\mathrm{d}J(\Gamma;\cdot)\) is the Eulerian derivative. The choice of \(\mathbb{H}\) dictates the regularity of the flow; common examples include \(\mathbb{H}=\textbf{H}^1(\Omega)\), \(\mathbb{H}=\textbf{L}^2(\Gamma)\), or \(\mathbb{H}=\textbf{H}^1(\Gamma)\)~\cite{SchulzCMAM2016,Schulz2016SICON}. In two dimensions, conformal-mapping-based flows have also been explored to enhance mesh quality for shape minimization~\cite{CR2018}. Let \(\Omega(t):=\tilde{\bm X}(t)(\Omega^0)\) denote the evolving domain induced by the flow map \(\tilde{\bm X}(t):\Omega^0\to\mathbb{R}^d\). The classical first-order shape gradient flow then reads
\begin{subequations}\label{first-order-flow}
\begin{align}
\partial_t\tilde{\bm X}(t) &= \tilde{\bm w}(t)\circ\tilde{\bm X}(t)
&& \text{in } \Omega^0, \label{eq:flo-map}\\
b(\tilde{\bm w}(t),\bm v) &= -\,\mathrm{d}J(\Gamma(t);\bm v)
&& \forall\,\bm v\in \mathbb{H}(t).
\end{align}
\end{subequations}

Despite their simplicity, first-order dynamics~\eqref{first-order-flow} often converge slowly and cannot escape shallow local basins. Since the Riesz isomorphism $\mathcal R:\mathbb{H}\to\mathbb{H}^\ast$ gives $\tilde{\bm w}=-\mathcal R^{-1}\mathrm{d}J$, the velocity vanishes as $\mathrm{d}J\to0$, causing stagnation in flat regions and premature convergence. This motivates inertial effects, which introduce kinetic memory to traverse plateaus and escape shallow minima.

Conceptually, second-order (inertial) flows are inspired by accelerated dynamics in convex optimization~\cite{AttouchMP2018,Edvardsson2012} and can be viewed as continuous-time analogues of Nesterov-type acceleration~\cite{Nesterov1983,Boyd2016}. This perspective has recently been extended to infinite-dimensional problems, including nonlinear second-order PDEs for Bose--Einstein condensates~\cite{ChenDong2023JCP} and dissipative inertial dynamics on Sobolev spaces for non-convex energies~\cite{2ndFlowSISC2025}.

Motivated by these developments, we consider an inertial second-order evolution for PDE-constrained shape optimization of the form
\begin{equation}\label{eq:inertial-shape-flow}
  \epsilon_0\,a\big(\partial_{tt}^\bullet \tilde{\bm X}(t),\, \partial_{t}^\bullet \tilde{\bm X}(t),\,\bm v\big)
  +\eta(t)\,b\big(\partial_t^\bullet \tilde{\bm X}(t),\,\bm v\big)
  = - \mathrm dJ\big(\Gamma(t);\,\bm v\big),
  \qquad \forall\,\bm v\in \mathbb{H}(t),
\end{equation}
where \(\partial_{tt}^\bullet \tilde{\bm X}(t)\) denotes the second material derivative, \(\epsilon_0\ge0\) and \(\eta(t)>0\) are inertia and damping parameters, and the first-order gradient flow~\eqref{first-order-flow} is recovered by setting \(\epsilon_0\equiv0\) and \(\eta(t)\equiv1\). The term \(a(\cdot,\cdot,\cdot)\) accounts for Reynolds-type transport effects induced by the evolving geometry. This formulation admits the mechanical energy
\begin{equation}\label{eq:mechanical-energy}
  \mathcal H(t)
  := J\big(\Gamma(t)\big)
  + \frac{\epsilon_0}{2}\,\|\partial_t^\bullet \tilde{\bm X}(t)\|_{\textbf{L}^2(\Omega(t))}^2,
\end{equation}
which satisfies the energy dissipation property \(\frac{\mathrm d}{\mathrm dt}\mathcal H(t)\le0\). While the inertial term allows for temporary non-monotonicity of the original objective functional \(J(\Gamma(t))\), 
the damping term \(\eta(t)\) ensures energy dissipation and suppresses spurious oscillations. 
At the discrete level, the resulting scheme is linear at each time step, computationally efficient, 
and exhibits significantly faster convergence in practice compared to the corresponding first-order flow~\eqref{first-order-flow}, 
as demonstrated in Section~\ref{sec:numerics}.

However, inertia alone does not prevent mesh degradation, which can occur for both first- and second-order flows. 
In the continuous setting, the shape of \(\Omega(t)\) is determined only by the normal boundary velocity; the tangential boundary motion and the extension into the interior are shape-invariant degrees of freedom, yet crucial for mesh-quality control. 
Standard interior extensions, such as harmonic extension~\cite{Boffi2004,Edelmann2022,LiXiaYang2023}, may lead to severe mesh distortion or even element entanglement under large deformations. 
This motivates incorporating a dedicated mesh-motion strategy to preserve mesh quality throughout the evolution and avoid remeshing.

To address this mesh-quality issue while respecting the prescribed shape evolution, we adopt and extend the Minimal Deformation Rate (MDR) framework~\cite{BaiHuLi2024}. 
In this framework, we impose the normal component of the boundary velocity as a hard constraint, namely
\(\bm w\cdot\bm n|_{\Gamma(t)}=\tilde{\bm w}\cdot\bm n\),
where \(\tilde{\bm w}\) is the velocity prescribed by the inertial flow~\eqref{eq:inertial-shape-flow}, with \(\tilde{\bm w}=\partial_t^\bullet \tilde{\bm X}(t)\). 
The bulk velocity field \(\bm w\) is then obtained by minimizing the instantaneous deformation-rate energy
\begin{equation}\label{MDR_intro}
  \frac12 \int_{\Omega(t)} \bigl|\varepsilon(\bm w)\bigr|^2,
  \quad \text{subject to } \bm w\cdot\bm n|_{\Gamma(t)}=\tilde{\bm w}\cdot\bm n,
\end{equation}
where \begin{align}\label{stress_tensor}
\varepsilon(\bm w):=\tfrac12(\nabla\bm w+\nabla\bm w^{\top})
\end{align} denotes the symmetric strain-rate tensor. 
This constrained minimization yields a linear variational problem at each time step, requiring only a single linear solver. 
By directly minimizing the deformation rate, the proposed method maintains high mesh quality throughout the evolution. It significantly outperforms both first- and second-order flows as well as classical approaches based on standard harmonic extension, offering a robust solution where specialized two-dimensional remedies, such as conformal mappings~\cite{CR2018}, do not readily generalize to three dimensions.


Despite their favorable theoretical properties, both classical gradient flows and the proposed inertial MDR framework remain prone to numerical instabilities when applied to non-smooth (e.g., cornered) or non-convex initial geometries. Such geometric irregularities typically induce severe mesh entanglement near corners and reentrant boundaries, often precipitating solver failure. Consequently, much of the existing literature has focused primarily on convex~\cite{ZhuEig,GongLiRao2024,Hiptmair2015BIT} or smooth star-shaped domains~\cite{Antunes2017}.

To address these robustness challenges, we regularize the second-order flow~\eqref{eq:inertial-shape-flow} by incorporating a surface-diffusion term. Surface diffusion, which can be interpreted as the \(H^{-1}\)-gradient flow of the area functional~\cite{Mullins1957,BaenschMorinNochetto2005,BaoZhao2021,LiBao2021}, is particularly suitable in this context, as it effectively smooths geometric singularities while preserving the enclosed volume—an essential property for PDE-constrained shape optimization. We implement this regularization by modifying the normal-velocity constraint as
\begin{align}\label{eq:boundary-regularization}
\hat{\bm w}\cdot\bm n = \bigl(\tilde{\bm w} + \alpha\,\Delta_\Gamma H\,\bm n\bigr)\cdot\bm n
\quad \text{on } \Gamma(t),
\end{align}
where \(\tilde{\bm w}\) is the velocity prescribed by the inertial flow~\eqref{eq:inertial-shape-flow}, and \(H\) is the mean curvature. The regularization parameter is adapted dynamically as \(\alpha = \|\tilde{\bm w}\|_{{\bf L}^2(\Gamma(t))}\), so that geometric smoothing is dominant in the early stages to resolve singularities and gradually diminishes as the flow approaches equilibrium.

To discretize the regularized flow, we employ the Barrett--Garcke--N\"urnberg (BGN) framework~\cite{BarrettGarckeNuernberg2007,BarrettGarckeNuernberg2008JCP,BarrettGarckeNuernberg2008SISC}. 
This formulation provides a reliable discretization of the fourth-order surface diffusion term while naturally generating an artificial tangential velocity that preserves mesh quality throughout the evolution. 
To the best of our knowledge, this is the first integration of surface-diffusion regularization into PDE-constrained shape optimization. 
The resulting regularized boundary velocity is then extended into the bulk domain using the MDR approach described above.

Beyond PDE-constrained shape design, we extend this inertial MDR framework to Willmore-driven surface hole filling to reconstruct high-quality interior patches from incomplete geometries. Standard area minimization via mean curvature flow is often inadequate for this task, as it typically yields only $G^0$ continuity (positional continuity)—ensuring the patches merely connect—without enforcing the $G^1$ continuity (tangent plane alignment) required for a visually smooth transition. We therefore employ Willmore flow to explicitly promote this higher-order geometric regularity. In contrast to the parametric finite element approach in~\cite{garcke2025energystableparametricfiniteelement}, we develop a second-order inertial formulation coupled with MDR mesh motion. This strategy avoids auxiliary geometric evolution equations, thereby mitigating long-time error accumulation while maintaining robustness even for incompatible or non-smooth initial data.



The remainder of the paper is organized as follows. Section~2 introduces four representative shape optimization models and presents the corresponding Eulerian derivatives. Section~3 presents the proposed second-order inertial flow for PDE-constrained shape optimization. Section~4 develops the inertial BGN--MDR framework, combining surface-diffusion regularization (via the BGN formulation) with MDR-based interior velocity extension. Section~5 adapts the inertial MDR framework to Willmore-driven surface hole filling, with particular emphasis on clamped boundary conditions and incompatible initial data. Section~6 reports numerical results demonstrating accelerated convergence and robust mesh-quality preservation. Section~7 concludes with summary remarks and directions for future research.
 
\section{Representative models of shape optimization}
In this section, we introduce four representative shape-optimization models. 
The first three are classical PDE-constrained problems—shape reconstruction, drag minimization for Stokes flow, and eigenvalue optimization for elliptic operators. 
The fourth model concerns Willmore-driven surface hole filling, which we view as a geometric variational problem without bulk PDE constraints.

\subsection{Shape reconstruction}\label{sec.ShapeRecon}
The Eulerian derivative for the objective functional in problem \eqref{shapeOptPoisson} is given by (see \cite[eqs.~(2.9)--(2.10)]{Hiptmair2015BIT}):
\begin{equation}
\begin{aligned}
{\rm d}J(\Gamma,u,p;\bm v)=& \int_\Omega \nabla u\cdot (\nabla \bm v+ \nabla \bm v^{\rm T})\nabla p 
- f \nabla p \cdot \bm v -(u-u_d)\nabla u_d\cdot \bm v \\
&+\int_\Omega \Big(\tfrac{1}{2}|u-u_d|^2-\nabla u\cdot \nabla p-up\Big)\nabla\cdot \bm v,
\end{aligned}
\end{equation}
where $\bm v\in \textbf{H}^1(\Omega
)$ is the deformation field, and the adjoint variable $p$ is the solution to the following boundary value problem:
\begin{equation}\label{ShapeReconPDE}
\begin{aligned}
-\Delta p+p&=u-u_d \quad &&{\rm in}\ \Omega, \qquad p=0 &&{\rm on}\ \Gamma.
\end{aligned}
\end{equation}
Applications of this model include Electrical Impedance Tomography (EIT) and the identification of pollutant sources or contaminated regions via inverse diffusion.

\subsection{Drag minimization in Stokes flow} \label{secdrag}
Drag minimization~\cite{MohammadiPironneau2010} plays a critical role in computational fluid dynamics, with applications ranging from aerodynamic wing design to medical bypass anastomoses and pipe flow optimization. Let $\mu>0$ denote the fluid viscosity. We partition the boundary as $\Gamma=\Gamma_i\cup\Gamma_w\cup\Gamma_o$, representing the inlet $\Gamma_i$, wall $\Gamma_w$, and outlet $\Gamma_o$, respectively. Given Dirichlet data $\bm u_d\in \mathbf{H}^{1/2}(\Gamma_i)$ and a source term $\bm f \in \textbf{H}^{-1}(\Omega)$, the velocity $\bm u$ and pressure $p$ satisfy the steady Stokes equations:
\begin{equation}\label{StokesEqu}
\begin{aligned}
-\mu \Delta \bm u+ \nabla p&= \bm f &&{\rm in}\ \Omega, \qquad \nabla\cdot \bm u=0 &&{\rm in}\ \Omega,\\
\bm u&=\bm u_d &&{\rm on}\ \Gamma_i,\qquad\quad \  \bm u=\bm 0 &&{\rm on}\ \Gamma_w,\\
\mu \nabla\bm u\,\bm n - p{\rm I}\bm n&=0 &&{\rm on}\ \Gamma_o,
\end{aligned}
\end{equation}
where $\bm n$ is the unit outward normal and ${\rm I}\in \mathbb{R}^{d\times d}$ is the identity matrix. For the drag minimization problem subject to a volume constraint:
\begin{equation}\label{dragMinimize}
\min_{\Omega\in\mathcal{O}_{\rm ad}, \vert\Omega\vert={\rm const}}J(\Gamma)=\int_\Omega\mu|\nabla \bm u|^2
\end{equation}
constrained by \eqref{StokesEqu}, the Eulerian derivative is (see \cite[eq.~(36)]{ZhuCMAME}):
\begin{equation}\label{dJStokes}
\begin{aligned}
{\rm d}J(\Gamma,\bm u,p;\bm v)=\int_\Omega \mu\Big(|\nabla \bm u|^2\nabla\cdot \bm v
- \nabla \bm u^{\rm T}:(\nabla \bm v+\nabla \bm v^{\rm T})\nabla \bm u^{\rm T}\Big)
+ 2\bm v\cdot \nabla \bm u^{\rm T}(\bm f-\nabla p).
\end{aligned}
\end{equation}
Since the objective functional in \eqref{dragMinimize} leads to an adjoint equation that coincides with the state equation, no separate adjoint variable is required.
 The volume constraint $|\Omega|=\text{const}$ is enforced via a Lagrange multiplier to prevent trivial scaling of the domain.

\subsection{Eigenvalue minimization}
Consider the Laplace eigenvalue problem:
\begin{equation}\label{EigenPoisson}
-\Delta u = \lambda u \quad \text{in } \Omega, \qquad u=0 \quad \text{on } \Gamma.
\end{equation}
This problem admits an increasing sequence of eigenpairs $\{(\lambda_\ell, u_\ell)\}_{\ell=1}^\infty \subset \mathbb{R} \times H_0^1(\Omega)$ with associated $L^2$-orthonormal eigenfunctions. We aim to minimize the $\ell$-th eigenvalue $\lambda_\ell$ subject to a volume constraint. The variational characterization is provided by the Courant--Fischer min--max principle:
\begin{equation}\label{MinEigen}
\min_{\Omega \in \mathcal{O}_{\text{ad}}, |\Omega|=1} J(\Omega) = \lambda_\ell, \quad 
\lambda_\ell := \min_{\substack{S_\ell \subset H^1_0(\Omega) \\ \dim S_\ell = \ell}} \max_{0 \neq v \in S_\ell} \frac{\int_\Omega |\nabla v|^2 }{\int_\Omega v^2 }.
\end{equation}
Since eigenvalues scale according to $\lambda_\ell(s\Omega) = s^{-2} \lambda_\ell(\Omega)$, unconstrained minimization would result in unbounded domain expansion; hence, the volume constraint is essential.

Assuming the eigenvalue $\lambda_\ell$ remains simple along the shape evolution, the corresponding Eulerian derivative in volume formulation is given by (cf. \cite[eq.~(6)]{ZhuEig}):
\begin{equation}\label{dJEigen}
{\rm d}J(\Omega, u_\ell, \lambda_\ell; \bm{v}) = \int_\Omega \text{div}(\bm{v}) \big( |\nabla u_\ell|^2 - \lambda_\ell u_\ell^2 \big) - \int_\Omega \nabla u_\ell \cdot (\nabla \bm{v} + \nabla \bm{v}^{\rm T}) \nabla u_\ell.
\end{equation}

In the presence of multiple eigenvalues, classical shape differentiability generally fails as the objective functional is only directionally differentiable. In practice, following the strategy in \cite{ZhuEig}, we compute the directional derivatives associated with the eigenspace corresponding to the multiple eigenvalue and employ their ensemble average to define a stable descent direction.

\subsection{Surface hole filling driven by Willmore flow}

Surface hole filling is a fundamental problem in Computer Aided Geometric Design (CAGD) and geometric modeling~\cite{hoschek1996fundamentals}. We consider a surface consisting of a known exterior region and a missing interior patch. Given the exterior surface and its boundary curve $\partial\Gamma$, the goal is to reconstruct the missing patch with $G^1$ geometric continuity across $\partial\Gamma$, ensuring tangent plane alignment and high surface quality.
A straightforward approach is to seek a surface of minimal area, leading to mean curvature flow. However, prescribing the boundary curve guarantees only $G^{0}$ (positional) continuity; since the surface normals along $\partial\Gamma$ are unconstrained, $G^{1}$ continuity is not achieved.

To promote higher-order geometric regularity, we consider the bending energy defined as the Dirichlet energy of the unit normal field
\(
\mathcal{E}(\Gamma) := \int_\Gamma |\nabla_\Gamma \bm n|^2 .
\)
This energy penalizes rapid variations of the tangent plane and promotes a smoothly varying surface geometry. It is closely related to the classical Willmore energy
\[
E(\Gamma):=\int_\Gamma H^{2},
\]
where $H$ denotes the mean curvature. Using the pointwise identity
\(
|\nabla_\Gamma \bm n|^{2}=H^{2}-2K,
\)
where \(K\) denotes the Gaussian curvature, we obtain
\(
\int_\Gamma |\nabla_\Gamma \bm n|^{2}
=\int_\Gamma H^{2} -2\int_\Gamma K.
\)
For a compact orientable surface \(\Gamma\) with boundary \(\partial\Gamma\), the Gauss--Bonnet theorem
(Section~4.5 in~\cite{carmo}) states that
\(
\int_\Gamma K\,\mathrm dA + \int_{\partial\Gamma} k_g\,\mathrm ds = 2\pi\chi(\Gamma),
\)
where \(k_g\) denotes the geodesic curvature of \(\partial\Gamma\) in \(\Gamma\).
If the boundary curve and its conormal (equivalently, the boundary tangent plane) are fixed,
then \(\int_{\partial\Gamma} k_g\,\mathrm ds\) is constant.
Moreover, the Euler characteristic of \(\Gamma\) is given by
\(\chi(\Gamma)=2-2g-b\), where \(g\) denotes the genus and \(b\) the number of boundary components
(Chapter~1 in~\cite{missy}; Chapter~6 in~\cite{JohnLEE}).
Since \(\chi(\Gamma)\) is a topological invariant, it follows that
\(\int_\Gamma K\) is constant within the admissible class.
Consequently,
\[
\int_\Gamma |\nabla_\Gamma \bm n|^{2}
= \int_\Gamma H^{2} + C,
\]
where \(C\) is a constant, and hence minimizing the Dirichlet energy
\(\int_\Gamma |\nabla_\Gamma \bm n|^{2}\)
is equivalent to minimizing the Willmore energy
\(\int_\Gamma H^{2}\).

Motivated by this equivalence, we formulate the hole-filling problem as a geometric evolution in which an initial spanning surface $\Gamma^{0}$ bounded by $\partial\Gamma$ evolves according to Willmore flow. The evolution is parametrized by a time-dependent map $\bm X(\cdot,t):\Gamma^{0}\to\mathbb{R}^{3}$ with $\Gamma(t)=\bm X(\Gamma^{0},t)$ and $\bm X(\cdot,0)=\mathrm{id}_{\Gamma^{0}}$, and is governed by
\begin{align}\label{willmore-flow}
\partial_t\bm X
=
\Big(
\Delta_{\Gamma(t)} H_{\Gamma(t)}
-\tfrac12\,H_{\Gamma(t)}\big(H_{\Gamma(t)}^{2}
-2|\nabla_{\Gamma(t)} \bm n_{\Gamma(t)}|^2\big)
\Big)\,\bm n_{\Gamma(t)}
\quad \text{on } \Gamma(t).
\end{align}
This purely normal velocity corresponds to the $L^2$--gradient flow of the Willmore energy in the interior, where $\bm n_{\Gamma(t)}$, $\nabla_{\Gamma(t)}\bm n_{\Gamma(t)}$, and $H_{\Gamma(t)}$ denote the unit normal, the Weingarten map, and the mean curvature, respectively.
To enforce $G^{1}$ continuity and ensure a seamless transition to the exterior surface, we impose the boundary conditions
\begin{subequations}\label{boundary}
\begin{align}
\bm X(p,t) &= p \qquad\text{for } p\in\partial\Gamma, \label{boundary_1}\\
\bm\mu(t) &= \bm\mu_{\mathrm{out}} \qquad\text{on } \partial\Gamma, \label{boundary_2}
\end{align}
\end{subequations}
where $\bm\mu(t)$ denotes the conormal to $\Gamma(t)$ along $\partial\Gamma$, and $\bm\mu_{\mathrm{out}}$ is the fixed conormal induced by the exterior surface. We do not require compatible initialization, that is, $\bm\mu(0)$ need not coincide with $\bm\mu_{\mathrm{out}}$; instead, the boundary conditions ensure that the conormal naturally relaxes to the prescribed configuration during the evolution.

\section{Inertial Gradient Flow for PDE-constrained Shape Optimization}
We formulate an inertial gradient flow for PDE-constrained shape optimization. Starting from the classical first-order $\mathbf{H}^1(\Omega(t))$ gradient flow, we incorporate an inertial contribution via the material time derivative of the descent velocity $\tilde{\bm w}$. This leads to a second-order-in-time evolution dynamics where the velocity $\tilde{\bm w}$ is governed not only by the instantaneous Eulerian derivative but also by the accumulated motion history encapsulated in the material derivative $\partial_t^\bullet  \tilde{\bm w}$.

The inertial contribution is scaled by a parameter $\epsilon_{0}>0$; its role is to introduce momentum into the shape evolution, thereby mitigating stagnation in nearly flat regions where the first--order descent velocity becomes small, and enabling accelerated convergence toward configurations of lower original objective value.

We first present the continuous variational formulation of the inertial flow and its associated structural properties, then derive a time discretization of the inertial term on evolving domains, and finally propose a computationally efficient fully discrete linear scheme. 
In the next section, we introduce an inertial--MDR coupling framework that maintains mesh quality without remeshing.

\subsection{${\bf H}^1(\Omega(t))$ gradient flow and domain evolution}\label{subsec:H1_flow}

Let $\Omega(t)\subset\mathbb{R}^d$ be a smoothly evolving domain with the boundary $\Gamma(t)=\partial\Omega(t)$.
The ${\bf H}^1(\Omega(t))$ shape gradient flow is defined implicitly through the following variational equation: Find the descent velocity $\tilde{\bm w}(t)\in{\bf H}^1(\Omega(t))$ such that
\begin{align}
\int_{\Omega(t)} \tilde{\bm w}(t) \cdot \bm \chi_{{\bm w}}
+  \nabla \tilde{\bm w}(t): \nabla \bm \chi_{{\bm w}} 
= -  {\rm d}J(\Gamma(t);\bm \chi_{{\bm w}}) 
\qquad \forall\, \bm \chi_{{\bm w}}	 \in {\bf H}^1(\Omega(t)).
\label{H1Descentflow}
\end{align}
Once the velocity field $\tilde{\bm w}(t)$ is determined via \eqref{H1Descentflow}, the domain $\Omega(t)$ is evolved as the image of the initial configuration $\Omega^0$ under a flow map $\widetilde{\phi}: \Omega^0 \times [0,T] \to \mathbb{R}^d$. Specifically, the flow map $\widetilde{\phi}$ is governed by the following relation:
\begin{align}\label{flow}
    \partial_t \widetilde{\phi}(p, t) = \tilde{\bm w}(\widetilde{\phi}(p, t), t) \quad \text{for } (p, t) \in \Omega^0 \times [0,T],
\end{align}
subject to the initial condition $\widetilde{\phi}(p, 0) = p$. The domain and its boundary are defined by
\[
\Omega(t):=\{\widetilde{\phi}(p,t):p\in\Omega^{0}\}, \qquad 
\Gamma(t):=\{\widetilde{\phi}(p,t):p\in\Gamma^{0}\},
\]
where $\Gamma^{0}:=\partial\Omega^{0}$ denotes the boundary of the initial domain. In particular, $\partial\Omega(t)=\Gamma(t)$.

\subsection{Inertial ${\bf H}^1(\Omega(t))$ gradient flow}\label{subsec:inertial_H1_flow}

To complement the standard gradient flow~\eqref{H1Descentflow}, we introduce inertia through the material time derivative of the descent velocity. 
Beyond the instantaneous steepest-descent mechanism, this inertial contribution provides dynamical persistence, alleviating slow-down when the shape gradient becomes small and improving robustness in nearly flat regions of the objective landscape. 
To preserve an energy-dissipation structure on the evolving domain, we employ a transport-consistent (Reynolds-corrected) formulation: in particular, we use the inertial form
\(
a(\partial_t^\bullet \tilde{\bm w},\tilde{\bm w},\bm\chi_{\bm w})
\),
which incorporates the geometric correction term associated with the Reynolds transport theorem. 
The strength of the inertial effect is controlled by \(\epsilon_0>0\), allowing the framework to be tuned for different models and numerical settings.

The inertial contribution is defined by the following form:
\begin{equation}
a(\partial_{t}^\bullet \tilde{\bm{w}}, \tilde{\bm{w}}, \bm{\chi}_w)
:= \int_{\Omega(t)} 
\Bigl[ \partial_t^{\bullet} \tilde{\bm{w}} 
+ \tfrac{1}{2} \tilde{\bm{w}} (\nabla \cdot \tilde{\bm{w}}) \Bigr] \cdot \bm \chi_w,
\end{equation}
while the $\textbf{H}^1(\Omega(t))$ inner product is represented by the bilinear form:
\begin{equation}
b(\tilde{\bm w}, \bm \chi_w)
:= \int_{\Omega(t)}
\bigl(\tilde{\bm w} \cdot \bm \chi_w + \nabla \tilde{\bm w} : \nabla \bm \chi_w \bigr).
\end{equation}
The resulting inertial $\textbf{H}^1(\Omega(t))$ gradient flow seeks a velocity $\tilde{\bm{w}}(t) \in \textbf{H}^1(\Omega(t))$ such that
\begin{equation}\label{2ndflowExp}
\epsilon_0 a(\partial_t^\bullet \tilde{\bm{w}}, \tilde{\bm{w}}, \bm{\chi}_w) + \eta(t) b(\tilde{\bm{w}}, \bm{\chi}_w) = -{\rm d}J(\Gamma(t); \bm \chi_w), \quad \forall \bm{\chi}_w \in \textbf{H}^1(\Omega(t)),
\end{equation}
where $\eta(t)$ is a time-dependent damping coefficient. 
Following the damping strategy in \cite{ZhuEig}, we set
\begin{equation}
\eta(t) = \frac{c}{t + t_0} \quad (c > 0, \, t_0 > 0),
\end{equation}
which renders the dynamics initially overdamped and thus dominated by gradient descent. 
As $t$ increases, the damping $\eta(t)$ decreases, so that the dynamics gradually transitions from an overdamped regime to one in which inertial effects are more visible. In this way, the evolution can traverse nearly flat regions of the objective landscape without stagnation, while the remaining oscillations are damped out asymptotically, which in turn facilitates convergence toward configurations of lower original objective value.
To establish the energy dissipation property of the mechanical energy, we first recall the following transport theorem.


\begin{lemma}[{\cite[Lemma~5.7]{Walker2015}}]\label{dtIntFlemma}
Let $\Omega(t)$ be a smoothly evolving domain with velocity field $\tilde{\bm w} \in {\bf W}^{1,\infty}(\Omega(t))$. Then, for any sufficiently regular function $f:\Omega(t)\times [0,T]\to \mathbb{R}$, it holds that
\begin{equation}
\frac{{\rm d}}{{\rm d}t} \int_{\Omega(t)} f
= \int_{\Omega(t)} \partial_t^{\bullet} f
+ f\, \nabla \cdot \tilde{\bm w},
\end{equation}
where $\partial_t^{\bullet} f := \partial_t f + \tilde{\bm w} \cdot \nabla f$ denotes the material derivative of $f$ along $\tilde{\bm w}$.
\end{lemma}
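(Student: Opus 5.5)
The plan is to pull the integral back to the fixed reference domain $\Omega^0$ via the flow map $\widetilde{\phi}(\cdot,t)$ of \eqref{flow}, differentiate under the integral sign there, and push the result forward again. Write $F(p,t):=f(\widetilde{\phi}(p,t),t)$ and $\mathcal J(p,t):=\det \nabla_p\widetilde{\phi}(p,t)$. For a smooth evolution with $\tilde{\bm w}\in{\bf W}^{1,\infty}$, the map $\widetilde{\phi}(\cdot,t)$ is a bi-Lipschitz diffeomorphism onto $\Omega(t)$ with $\mathcal J>0$, since $\mathcal J(\cdot,0)=1$ and $\mathcal J$ stays positive by continuity and the Liouville formula below. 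The change of variables then gives
\begin{equation*}
\int_{\Omega(t)} f=\int_{\Omega^0} F(p,t)\,\mathcal J(p,t)\,\d p .
\end{equation*}
Because the domain of integration no longer depends on $t$, we may differentiate under the integral, assuming $f$ regular enough that $\partial_t(F\mathcal J)$ is integrable uniformly in $t$.

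Two pointwise identities drive the computation.

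\emph{Material derivative.} By the chain rule and \eqref{flow},
\begin{equation*}
\partial_t F(p,t)=\partial_t f+\tilde{\bm w}\cdot\nabla f
\end{equation*}
evaluated at $(\widetilde{\phi}(p,t),t)$. This is exactly $(\partial_t^\bullet f)\circ\widetilde{\phi}$.

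\emph{Liouville formula.} We claim $\partial_t\mathcal J=\bigl((\nabla\cdot\tilde{\bm w})\circ\widetilde{\phi}\bigr)\,\mathcal J$. Differentiating \eqref{flow} in $p$ gives $\partial_t\nabla_p\widetilde{\phi}=(\nabla\tilde{\bm w}\circ\widetilde{\phi})\,\nabla_p\widetilde{\phi}$. Jacobi's formula $\partial_t\det A=\det A\,\mathrm{tr}(A^{-1}\partial_tA)$ then yields
\begin{equation*}
\partial_t\mathcal J=\mathcal J\,\mathrm{tr}\bigl(\nabla\tilde{\bm w}\circ\widetilde{\phi}\bigr),
\end{equation*}
and the trace of $\nabla\tilde{\bm w}$ is $\nabla\cdot\tilde{\bm w}$.

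Combining the two identities, the derivative of the integral is
\begin{equation*}
\frac{\d}{\d t}\int_{\Omega(t)} f=\int_{\Omega^0}\bigl[(\partial_t^\bullet f)+f\,\nabla\cdot\tilde{\bm w}\bigr]\circ\widetilde{\phi}\;\mathcal J\,\d p .
\end{equation*}
Changing variables back to $\Omega(t)$ gives the stated identity.

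The main obstacle is justification rather than algebra. Under ${\bf W}^{1,\infty}$ velocity, $\nabla\tilde{\bm w}$ is only defined almost everywhere, so the differentiated equation for $\nabla_p\widetilde{\phi}$ must be read in the Carathéodory/a.e. sense. To handle this, I would first prove the formula for smooth $\tilde{\bm w}$ and $f$, where every step is classical. I would then pass to the limit with mollified velocities: the flow maps converge uniformly with Jacobians converging in $L^1$, and both sides converge, so the identity holds in integrated form and hence for a.e. $t$, or for all $t$ when the right side is continuous in $t$. Since the lemma is stated for a smoothly evolving domain with sufficiently regular $f$, the smooth-case computation above is the essential content.
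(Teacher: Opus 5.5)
Your proof is correct and is the standard proof of this Reynolds transport identity. The paper gives no proof of its own and simply cites \cite[Lemma~5.7]{Walker2015}; the standard argument behind that result is the same as yours: a change of variables to a fixed reference domain, the chain rule giving $\partial_t(f\circ\widetilde{\phi})=(\partial_t^\bullet f)\circ\widetilde{\phi}$, and the Jacobian identity $\partial_t\mathcal J=\bigl((\nabla\cdot\tilde{\bm w})\circ\widetilde{\phi}\bigr)\mathcal J$. Your closing mollification argument for merely ${\bf W}^{1,\infty}$ velocities is more than the lemma needs, since it is stated only for smoothly evolving domains and sufficiently regular $f$.
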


\begin{lemma}[Energy dissipation for the inertial gradient flow]
Let \(\Omega(t)\) denote the evolving domain with boundary \(\Gamma(t)\), whose motion is driven by the velocity field \(\tilde{\bm w}\) satisfying~\eqref{2ndflowExp}. We define the associated mechanical energy by
\begin{align}\label{modified-energy}
\mathcal H(t) :=  J\big(\Gamma(t)\big) + \frac{\epsilon_0}{2}\,\|\tilde{\bm w}(t)\|_{{\bf L}^2(\Omega(t))}^2
 =  \int_{\Omega(t)}\!\Bigl(j(x,u) + \tfrac{\epsilon_0}{2}\,|\tilde{\bm w}|^2\Bigr).
\end{align}
The following dissipation property is satisfied for all $t>0$:
\begin{equation}\label{energyLaw}
\frac{{\rm d}}{{\rm d}t}\,\mathcal H(t)
 =  -\,\eta(t)\,\Bigl(\|\tilde{\bm w}\|_{{\bf L}^2(\Omega(t))}^2 + \|\nabla \tilde{\bm w}\|_{{\bf L}^2(\Omega(t))}^2\Bigr) \le 0.
\end{equation}
\end{lemma}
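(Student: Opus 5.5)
The plan is to differentiate the two parts of $\mathcal H(t)$ separately and then test the inertial flow \eqref{2ndflowExp} with the velocity itself, $\bm\chi_w=\tilde{\bm w}(t)$.

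\textbf{The objective term.} Since $\Omega(t)$ is transported by $\tilde{\bm w}$ through the flow map \eqref{flow}, the Eulerian derivative gives the chain rule
\[
\frac{\mathrm d}{\mathrm dt}J(\Gamma(t))=\mathrm dJ(\Gamma(t);\tilde{\bm w}(t)).
\]
This follows directly from the definition of the shape derivative along the perturbation family generated by $\tilde{\bm w}$. The state and adjoint dependence is already built into $\mathrm dJ$, so I take this identity as given.

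\textbf{The kinetic term.} I apply Lemma~\ref{dtIntFlemma} with $f=\tfrac12|\tilde{\bm w}|^2$. Using $\partial_t^\bullet\tfrac12|\tilde{\bm w}|^2=\tilde{\bm w}\cdot\partial_t^\bullet\tilde{\bm w}$, this yields
\[
\frac{\mathrm d}{\mathrm dt}\frac{\epsilon_0}{2}\|\tilde{\bm w}\|^2_{\mathbf L^2(\Omega(t))}
=\epsilon_0\int_{\Omega(t)}\Bigl[\partial_t^\bullet\tilde{\bm w}+\tfrac12\tilde{\bm w}(\nabla\cdot\tilde{\bm w})\Bigr]\cdot\tilde{\bm w}
=\epsilon_0\,a(\partial_t^\bullet\tilde{\bm w},\tilde{\bm w},\tilde{\bm w}).
\]
This is exactly why the Reynolds-corrected form $a$ was chosen. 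The regularity assumption $\tilde{\bm w}\in\mathbf W^{1,\infty}$ from the lemma is needed here, and I will assume enough smoothness that $f$ qualifies.

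\textbf{Combining.} Because $\tilde{\bm w}(t)\in\mathbf H^1(\Omega(t))$ is an admissible test function, testing \eqref{2ndflowExp} with $\bm\chi_w=\tilde{\bm w}$ gives
\[
\epsilon_0 a(\partial_t^\bullet\tilde{\bm w},\tilde{\bm w},\tilde{\bm w})+\mathrm dJ(\Gamma(t);\tilde{\bm w})=-\eta(t)\,b(\tilde{\bm w},\tilde{\bm w}).
\]
Summing the two derivative identities and substituting this relation gives
\[
\frac{\mathrm d}{\mathrm dt}\mathcal H(t)=-\eta(t)\bigl(\|\tilde{\bm w}\|^2_{\mathbf L^2(\Omega(t))}+\|\nabla\tilde{\bm w}\|^2_{\mathbf L^2(\Omega(t))}\bigr),
\]
which is \eqref{energyLaw}. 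The sign follows from $\eta(t)=c/(t+t_0)>0$.

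\textbf{Main obstacle.} The algebra is routine. The delicate point is justifying $\frac{\mathrm d}{\mathrm dt}J(\Gamma(t))=\mathrm dJ(\Gamma(t);\tilde{\bm w}(t))$ for a time-dependent velocity rather than a fixed perturbation field. I would handle it by writing $\Omega(t+s)$, to first order in $s$, as the image of $\Omega(t)$ under $\mathrm{id}+s\tilde{\bm w}(t)$. Smoothness in time of $\tilde{\bm w}$ and the standard equivalence of the speed and perturbation-of-identity methods for Eulerian derivatives then allow the derivative to be computed using the frozen field $\tilde{\bm w}(t)$.
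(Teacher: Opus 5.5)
Your proof is correct and follows essentially the same route as the paper. The paper applies the transport lemma to the combined integrand $\tfrac{\epsilon_0}{2}|\tilde{\bm w}|^2+j(x,u)$ and reads off $\mathrm{d}J(\Gamma(t);\tilde{\bm w})$ from the $j$-part, which is exactly your identity $\frac{\mathrm d}{\mathrm dt}J(\Gamma(t))=\mathrm dJ(\Gamma(t);\tilde{\bm w}(t))$; it then tests \eqref{2ndflowExp} with $\bm\chi_w=\tilde{\bm w}$, as you do, and you additionally make explicit the justification of that chain rule for a time-dependent velocity, which the paper leaves implicit.
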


\begin{proof}
Applying Lemma~\ref{dtIntFlemma} with $f=\tfrac{\epsilon_0}{2}|\tilde{\bm w}|^2 + j(x,u)$ yields
\begin{equation}\label{EnerS1}
\begin{aligned}
\frac{{\rm d}}{{\rm d}t}\int_{\Omega(t)}\tfrac{\epsilon_0}{2}\vert \tilde{\bm w}\vert^2+j(x,u)
&= \int_{\Omega(t)}\Bigl(\epsilon_0 \tilde{\bm w} \cdot \partial_t^{\bullet}\tilde{\bm w} 
+ \tfrac{\epsilon_0}{2}\vert \tilde{\bm w}\vert^2 \nabla \cdot \tilde{\bm w}\Bigr)
+ {\rm d}J(\Gamma(t);\tilde{\bm w}).
\end{aligned}
\end{equation}
Then, inserting the weak formulation \eqref{2ndflowExp} with the choice $\bm \chi_w=\tilde{\bm w}$ leads directly to
\[
\frac{{\rm d}}{{\rm d}t} \mathcal H(t) = -\eta(t) \|\tilde{\bm w}\|_{{\bf L}^2(\Omega(t))}^2 - \eta(t)\|\nabla \tilde{\bm w}\|_{{\bf L}^2(\Omega(t))}^2,
\]
which establishes \eqref{energyLaw}.
\end{proof}

\subsection{Linear time discretization of the inertial term}\label{subsec:transport_discretization_inertial}

For the time discretization of the inertial term $a(\partial_{t}^\bullet \tilde{\bm{w}}, \tilde{\bm{w}}, \bm{\chi}_{\bm w})$, we adopt the  approach developed in~\cite{Gao2024_rotation}.
Let $[0,T]$ be the time interval partitioned into discrete levels $t_n = n \tau$ for $n = 0, 1, \ldots, N$, where $\tau > 0$ denotes the time step size. By applying the transport theorem, the following identity is established for any test function $\bm \chi_{\bm w} \in \mathbf{L}^2(\Omega^n)$:
\begin{align}\label{identity-1}
 a(\partial_{t}^\bullet \tilde{\bm{w}}, \tilde{\bm{w}}, \bm{\chi}_{\bm w}) &= \int_{\Omega^n} \partial_t^\bullet \tilde{\bm{w}} \cdot \bm \chi_{\bm w} 
 + \frac{1}{2} \int_{\Omega^n} (\nabla \cdot \tilde{\bm{w}}) \tilde{\bm{w}} \cdot \bm \chi_{\bm w} \notag\\
 &= \frac{1}{2} \int_{\Omega^n} \partial_t^\bullet \tilde{\bm{w}} \cdot \bm \chi_{\bm w} + \frac{1}{2} \frac{\rm d}{\rm d t} \bigg|_{t=t_n} \int_{\Omega(t)} \tilde{\bm{w}} \cdot \tilde{\bm \chi}_{\bm w} 
 - \frac{1}{2} \int_{\Omega^n} \tilde{\bm{w}} \cdot \partial_t^{\bullet} \tilde{\bm \chi}_{\bm w},
\end{align}
where $\Omega^n = \Omega(t_n)$ represents the domain at time $t_n$, and $\tilde{\bm \chi}_{\bm w}$ is an extension of $\bm \chi_{\bm w}$ to the evolving domain $\Omega(t)$.

A convenient choice is to construct the extension $\tilde{\bm \chi}_{\bm w}$ using the flow map $\widetilde{\phi}$ from \eqref{flow}. We introduce the pullback mapping
\[
\widetilde{\Psi}^n(t) := \widetilde{\phi}(\cdot,t_n) \circ \widetilde{\phi}(\cdot,t)^{-1} : \Omega(t) \to \Omega^n,
\]
which relates the current domain $\Omega(t)$ to the configuration $\Omega^n$. By defining $\tilde{\bm \chi}_{\bm w} := \bm \chi_{\bm w} \circ \widetilde{\Psi}^n$, the extension satisfies $\partial_t^{\bullet}\tilde{\bm \chi}_{\bm w} = 0$ by construction. Consequently, the identity \eqref{identity-1} simplifies to:
\begin{align}\label{inter0}
    a(\partial_{t}^\bullet \tilde{\bm{w}}, \tilde{\bm{w}}, \bm{\chi}_{\bm w}) = \frac{1}{2} \int_{\Omega^n} \partial_t^{\bullet} \tilde{\bm{w}} \cdot \bm \chi_{\bm w} 
    + \frac{1}{2} \frac{\rm d}{{\rm d}t}\Big|_{t=t_n} \int_{\Omega(t)} \tilde{\bm{w}} \cdot (\bm \chi_{\bm w} \circ \widetilde{\Psi}^n).
\end{align}
This reformulation serves as the basis for the time discretization. Let $\tilde{\bm X}^n := \widetilde{\Psi}^n(t_{n-1}) : \Omega^{n-1} \to \Omega^n$ denote the map from the previous domain $\Omega^{n-1}$ to the current domain $\Omega^n$. 
The first term on the right-hand side of \eqref{inter0} is discretized as:
\begin{align}
\int_{\Omega^n} \partial_t^{\bullet} \tilde{\bm w} \cdot \bm \chi_{\bm w} 
&= \int_{\Omega^n} \frac{\tilde{\bm w}^{n+1} - \tilde{\bm w}^n \circ (\tilde{\bm X}^n)^{-1}}{\tau} \cdot \bm \chi_{\bm w} + \mathcal{O}(\tau) \notag\\
&= \frac{1}{\tau} \int_{\Omega^n} \tilde{\bm w}^{n+1} \cdot \bm \chi_{\bm w} - \frac{1}{\tau} \int_{\Omega^{n-1}} \tilde{\bm w}^n \cdot (\bm \chi_{\bm w} \circ \tilde{\bm X}^n) + \mathcal{O}(\tau), \label{inter1}
\end{align}
where $\tilde{\bm w}^{n+1}$ is defined on $\Omega^n$ and $\tilde{\bm w}^n$ on $\Omega^{n-1}$. The second equality follows from a change of variables, noting that $\det \nabla \tilde{\bm X}^n = 1 + \mathcal{O}(\tau)$ under the first-order approximation of the flow map.
Similarly, the second term on the right-hand side of \eqref{inter0} is approximated by:
\begin{align}
\frac{\rm d}{{\rm d}t}\Big|_{t=t_n} \int_{\Omega(t)} \tilde{\bm{w}} \cdot (\bm \chi_{\bm w} \circ \widetilde{\Psi}^n)
&= \frac{1}{\tau} \left( \int_{\Omega^{n-1}} (\tilde{\bm{w}}^{n+1} \circ \tilde{\bm X}^n) \cdot (\bm \chi_{\bm w} \circ \tilde{\bm X}^n) - \int_{\Omega^{n-1}} 	\tilde{\bm{w}}^n 	\cdot (\bm	\chi_{\bm w}	\circ	\tilde{\bm X}^n)	\right)\notag\\
& \quad\,\, +	\mathcal{O}(\tau).	\label{inter2}
\end{align}

\subsection{Fully discretized linear numerical scheme}\label{subsec:linear_scheme_inertial}

To define the inertial gradient flow in a fully discrete setting, we complement the time discretization introduced in Section~\ref{subsec:transport_discretization_inertial} with a spatial discretization based on the parametric finite element method. We discretize the initial domain $\Omega^0$ by a shape-regular, quasi-uniform triangulation $\mathcal{T}_h^0$ of simplices with mesh size $h$, thus defining the discrete domain $\Omega_h^0$. To represent curved geometries, we employ isoparametric finite elements of degree $k$: each (possibly curved) simplex $K \in \mathcal{T}_h^0$ is given as the image of a polynomial mapping $F_K : \hat K \to K$ of degree $k$ from the reference simplex $\hat K$, which we refer to as the parametrization of $K$.

For each time level $n \ge 1$, the discrete domain $\Omega_h^n$ is defined recursively from $\Omega_h^{n-1}$. Let $\mathcal{T}_h^{n-1}$ denote the triangulation of $\Omega_h^{n-1}$ at time $t_{n-1}$. The finite element space of degree $r$ is defined as
\[
{\mathcal{U}}_{h,r}^{n-1} 
= \left\{ v_h \in C^0(\Omega_h^{n-1}) : v_h|_{K} \in \mathbb{P}^r(K)\ \text{for all } K \in \mathcal{T}_h^{n-1}\right\}.
\]
The domain at time $t_n$ is then defined as the image of $\Omega_h^{n-1}$ under a discrete flow map $\tilde{\bm{X}}_h^n \in ({\mathcal{U}}_{h,r}^{n-1})^d$, i.e.,
\(
\Omega_h^n = \tilde{\bm{X}}_h^n(\Omega_h^{n-1}),
\)
and its boundary $\partial \Omega_h^n$ is denoted by $\Gamma_h^n$, with
\(
\Gamma_h^n = \tilde{\bm{X}}_h^n(\Gamma_h^{n-1}).
\)
The scalar and vector-valued boundary finite element spaces of degree $r$ on $\Gamma_h^{n-1}$ are denoted by $S_{h,r}^{n-1}$ and $(S_{h,r}^{n-1})^d$, respectively.

By incorporating the time discretization introduced in Section~\ref{subsec:transport_discretization_inertial}, we arrive at a computationally efficient linear numerical scheme. Taking the shape reconstruction problem from Section~\ref{sec.ShapeRecon} as a representative example, the fully discrete formulation seeks the updated velocity $\tilde{\bm w}_h^{n+1} \in (\mathcal{U}_{h,r}^n)^d$ together with the state and adjoint variables $(u_h^n, p_h^n) \in \mathcal{U}_{h,r}^n \times \mathcal{U}_{h,r}^n$ such that:
\begin{subequations}\label{descent_scheme}
    \begin{align}
& \frac{\epsilon_0}{2\tau} \int_{\Omega^n_h} \tilde{\bm w}_h^{n+1} \cdot {\bm \chi}_{\bm w}
+ \frac{\epsilon_0}{2\tau} \int_{\Omega_h^{n-1}} (\tilde{\bm w}_h^{n+1} \circ \tilde{\bm X}_h^n) \cdot ({\bm \chi}_{\bm w} \circ \tilde{\bm X}_h^n)  + \eta(t^n) \int_{\Omega^n_h} \left( \tilde{\bm w}_h^{n+1} \cdot {\bm \chi}_{\bm w} + \nabla \tilde{\bm w}_h^{n+1}: \nabla {\bm \chi}_{\bm w} \right) \notag\\
&= \frac{\epsilon_0}{\tau} \int_{\Omega_h^{n-1}} \tilde{\bm w}_h^n \cdot ({\bm \chi}_{\bm w} \circ \tilde{\bm X}_h^n)
- \d J(\Gamma_h^{n},u_h^{n},p_h^{n};\bm \chi_{\bm w}), \label{2ndTimeDisSchemeLinear_0}\\[0.3em]
& \int_{\Omega_h^{n}} (\nabla u_h^{n} \cdot \nabla \chi_u + u_h^{n} \chi_u) 
= \int_{\Omega_h^{n}} f \chi_u, \label{2ndTimeDisSchemeLinear_1}\\[0.3em]
& \int_{\Omega_h^{n}} (\nabla p_h^{n} \cdot \nabla \chi_p + p_h^{n} \chi_p)
= \int_{\Omega_h^{n}} (u_h^{n} - u_d)\chi_p, \label{2ndTimeDisSchemeLinear_2}
\end{align}
\end{subequations}
for all test functions ${\bm \chi}_{\bm w}\in (\mathcal{U}_{h,r}^n)^d$ and $(\chi_u,\chi_p)\in \mathcal{U}_{h,r}^{n} \times \mathcal{U}_{h,r}^n$. 

Once the velocity $\tilde{\bm w}_h^{n+1}$ has been determined, we define the associated flow map
\(
\tilde{\bm X}_h^{n+1} := \mathrm{id} + \tau \tilde{\bm w}_h^{n+1},
\)
which is used to advance the discrete domain from time level $t_n$ to $t_{n+1}$ via
\(
\Omega_h^{n+1} := \tilde{\bm X}_h^{n+1}(\Omega_h^{n}).
\)
In the initial level ($n=0$), the weak formulation \eqref{2ndTimeDisSchemeLinear_0} involves integrals over the auxiliary domain $\Omega_h^{-1}$. To initialize the scheme, we set $\Omega_h^{-1} = \Omega_h^{0}$, $\tilde{\bm w}_h^{0} = \bm 0$, and $\tilde{\bm X}_h^{0} = \mathrm{id}$.

For the shape reconstruction problem \eqref{shapeOptPoisson} and the eigenvalue minimization problem \eqref{EigenPoisson}--\eqref{dJEigen}, we employ isoparametric finite elements, using degree--$1$ spaces for the velocity, state, and adjoint variables. For the Stokes drag minimization problem \eqref{StokesEqu}--\eqref{dJStokes}, equal-order discretizations of the velocity and pressure do not satisfy the inf--sup stability condition, so we use the MINI element pair, which provides a stable approximation for the Stokes equations.
\begin{remark}\upshape
For a fixed time level $n$, the fully discrete scheme~\eqref{descent_scheme} consists of linear subproblems and is therefore linear and uniquely solvable. The state and adjoint problems~\eqref{2ndTimeDisSchemeLinear_1}--\eqref{2ndTimeDisSchemeLinear_2} involve coercive bilinear forms on $\mathcal U_{h,r}^n$ and hence admit unique solutions $(u_h^{n},p_h^{n})$ by the Lax--Milgram lemma. The shape gradient $\d J(\Gamma_h^{n},u_h^{n},p_h^{n};\bm \chi_{\bm w})$ is evaluated using these current state and adjoint variables and therefore acts as a known linear functional in the velocity equation. Consequently, the velocity equation~\eqref{2ndTimeDisSchemeLinear_0} also induces a coercive bilinear form on $(\mathcal U_{h,r}^n)^d$, and the existence and uniqueness of the solution $\tilde{\bm w}_h^{n+1}$ follow directly from the Lax--Milgram lemma.
\end{remark}



\section{Inertial MDR Framework for PDE--Constrained Shape Optimization}
In the continuous setting, the evolution of the domain $\Omega(t)$ is uniquely determined by
the normal component of the boundary velocity on $\Gamma(t)$. 
Tangential motions along the boundary, as well as the extension of the velocity field into the
interior, do not alter the geometric shape of the domain. 
These additional degrees of freedom can therefore be exploited to improve mesh quality during
the evolution, without affecting the underlying physical motion.
By an appropriate choice of the tangential boundary velocity and the interior mesh velocity,
element distortion can be significantly reduced, thereby avoiding frequent remeshing.

The practical task is thus to construct a bulk velocity field in $\Omega(t)$ that propagates
the prescribed normal boundary motion in a mesh--robust manner.
A standard approach is to employ a harmonic extension of the boundary data
\cite{ZhuEig,GongLiRao2024}.
While this strategy is widely used for bulk--surface coupling, purely harmonic extensions are
well known to suffer from severe mesh distortion in the presence of large deformations.

\subsection{Minimal Deformation Rate Framework}\label{subsec:MDR_mesh_motion}

To obtain a mesh motion that better preserves element quality, we adopt the
minimal deformation rate (MDR) principle.
The inertial gradient flow provides a bulk velocity field $\tilde{\bm w}$; however, only its
normal component $\tilde{\bm w}\cdot\bm n$ on the boundary $\Gamma(t)$ is geometrically relevant.
Accordingly, we retain this prescribed normal velocity, while the tangential boundary motion
and the interior extension are determined by minimizing an instantaneous deformation--rate
functional.

With the symmetric velocity gradient $\varepsilon(\cdot)$ defined in \eqref{stress_tensor},
we consider the constrained minimization problem
\begin{equation}\label{defE}
\min_{\bm w\in\mathcal A(t)} \ \mathcal E(\bm w)
:= \frac12 \int_{\Omega(t)} |\varepsilon(\bm w)|^2,
\end{equation}
over the admissible set
\begin{equation}\label{defA}
\mathcal A(t)
:= \Bigl\{ \bm w \in \mathbf H^1(\Omega(t)) :
\ \bm w\cdot\bm n = \tilde{\bm w}\cdot\bm n \ \text{on } \Gamma(t) \Bigr\}.
\end{equation}
We denote by $\bm w^*$ the minimizer of \eqref{defE}--\eqref{defA}, which defines the bulk velocity
field with minimal instantaneous deformation rate among all extensions consistent with the
prescribed normal boundary motion.
To characterize $\bm w^*$, we consider variations
$\delta\bm\phi \in \mathbf H^1(\Omega(t))$ satisfying the homogeneous normal constraint
$\delta\bm\phi\cdot\bm n = 0$ on $\Gamma(t)$.
The first--order optimality condition then reads
\begin{equation}
\int_{\Omega(t)} \varepsilon(\bm w^*) : \varepsilon(\delta\bm\phi) = 0.
\end{equation}
Integrating by parts yields
\begin{equation}\label{MDRiden}
-\int_{\Omega(t)} \operatorname{div}\!\bigl(\varepsilon(\bm w^*)\bigr)\cdot \delta\bm\phi
+ \int_{\Gamma(t)} \bigl(\varepsilon(\bm w^*)\bm n\bigr)\cdot \delta\bm\phi = 0.
\end{equation}
Since $\delta\bm\phi$ is arbitrary in the interior and purely tangential on $\Gamma(t)$,
there exists a scalar Lagrange multiplier $\kappa$ on $\Gamma(t)$ such that the Euler--Lagrange
system can be written as
\begin{subequations}\label{euler-haha}
\begin{align}
\operatorname{div}\!\bigl(\varepsilon(\bm w^*)\bigr) &= 0
\qquad \text{in } \Omega(t), \\
\varepsilon(\bm w^*)\bm n &= \kappa\, \bm n
\qquad \text{on } \Gamma(t),
\end{align}
\end{subequations}
together with the constraint
$\bm w^*\cdot\bm n = \tilde{\bm w}\cdot\bm n$ on $\Gamma(t)$.


At the fully discrete level, the MDR bulk mesh velocity is computed as the discrete analogue
of the variational problem \eqref{MDRiden}--\eqref{euler-haha}.
Specifically, we seek
$({\bm w}_h^{n+1}, \kappa_h^{n+1}) \in (\mathcal{U}_{h,r}^n)^d \times S_{h,r}^n$
such that
\begin{equation}\label{num-bulk-MDR}
\left\{
\begin{aligned}
&\int_{\Omega^{n}_h} \varepsilon({\bm w}_h^{n+1}) : \varepsilon(\bm \eta_h)
= \int_{\Gamma^n_h} \kappa_h^{n+1}\, \bm n_h^n \cdot \bm \eta_h
\quad &&\forall\, \bm \eta_h \in (\mathcal{U}_{h,r}^n)^d,\\
&\int_{\Gamma^n_h} {\bm w}_h^{n+1} \cdot \bm n^{n}_h \, \phi_h
= \int_{\Gamma^n_h} \bigl(\tilde{\bm w}_h^{n+1} \cdot \bm n_h^n\bigr)\, \phi_h
\quad &&\forall\, \phi_h \in S_{h,r}^n.
\end{aligned}
\right.
\end{equation}
Here, $\bm n_h^n$ denotes the piecewise constant unit normal on $\Gamma_h^n$, and
$\tilde{\bm w}_h^{n+1} \cdot \bm n_h^n$ is the discrete normal boundary velocity prescribed
by the inertial numerical scheme \eqref{descent_scheme}.

As a representative example, we consider the shape reconstruction problem introduced in
Section~\ref{sec.ShapeRecon}.
The fully discrete formulation couples the inertial flow with the MDR method
and seeks
\[
(\tilde{\bm w}_h^{n+1}, \bm w_h^{n+1}, \kappa_h^{n+1})
\in (\mathcal{U}_{h,r}^n)^d \times (\mathcal{U}_{h,r}^n)^d \times S_{h,r}^n,
\]
together with the state and adjoint variables
$(u_h^n, p_h^n) \in \mathcal{U}_{h,r}^n \times \mathcal{U}_{h,r}^n$,
such that
\begin{subequations}\label{descent_scheme_MDR}
\begin{align}
& \frac{\epsilon_0}{2\tau} \int_{\Omega^n_h} \tilde{\bm w}_h^{n+1} \cdot {\bm \chi}_{\bm w}
+ \frac{\epsilon_0}{2\tau} \int_{\Omega_h^{n-1}} (\tilde{\bm w}_h^{n+1} \circ \bm X_h^n)
\cdot ({\bm \chi}_{\bm w} \circ \bm X^n_h)
+ \eta(t^n) \int_{\Omega^n_h}
\left( \tilde{\bm w}_h^{n+1} \cdot {\bm \chi}_{\bm w}
+ \nabla \tilde{\bm w}_h^{n+1} : \nabla {\bm \chi}_{\bm w} \right) \notag\\
&\qquad =
\frac{\epsilon_0}{\tau} \int_{\Omega_h^{n-1}} \tilde{\bm w}_h^n
\cdot ({\bm \chi}_{\bm w} \circ \bm X^n_h)
- \d J(\Gamma_h^{n}, u_h^{n}, p_h^{n}; \bm \chi_{\bm w}), \label{2ndTimeDisSchemeLinear_0_MDR}\\[0.3em]
& \int_{\Omega_h^{n}} (\nabla u_h^{n} \cdot \nabla \chi_u + u_h^{n} \chi_u)
= \int_{\Omega_h^{n}} f \chi_u, \label{2ndTimeDisSchemeLinear_1_MDR}\\[0.3em]
& \int_{\Omega_h^{n}} (\nabla p_h^{n} \cdot \nabla \chi_p + p_h^{n} \chi_p)
= \int_{\Omega_h^{n}} (u_h^{n} - u_d) \chi_p, \label{2ndTimeDisSchemeLinear_2_MDR}\\[0.3em]
& \int_{\Omega^{n}_h} \varepsilon({\bm w}_h^{n+1}) : \varepsilon(\bm \eta_h)
= \int_{\Gamma^n_h} \kappa_h^{n+1} \, \bm n_h^n \cdot \bm \eta_h, \label{MDR}\\
& \int_{\Gamma^n_h} ({\bm w}_h^{n+1} \cdot \bm n^{n}_h)\, \phi_h
= \int_{\Gamma^n_h} \tilde{\bm w}_h^{n+1} \cdot \bm n_h^n \, \phi_h, \label{MDR_normal}
\end{align}
\end{subequations}
for all admissible test functions.
It is important to emphasize that the discrete flow map
\[
\bm X_h^{n+1} := \mathrm{id} + \tau \bm w_h^{n+1}
\]
depends explicitly on the mesh velocity $\bm w_h^{n+1}$.
Accordingly, the discrete domain is updated via
\[
\Omega_h^{n+1} := \bm X_h^{n+1}(\Omega_h^{n}).
\]
This differs from the inertial scheme \eqref{descent_scheme},a where the discrete flow map depends only on the inertial velocity $\tilde{\bm w}_h^{n+1}$.
Consequently, the present formulation is genuinely coupled: the flow map $\bm X_h^{n}$ is determined by the mesh velocity $\bm w_h^{n}$ and enters the inertial step \eqref{2ndTimeDisSchemeLinear_0_MDR}, while \eqref{2ndTimeDisSchemeLinear_0_MDR} in turn determines the inertial velocity $\tilde{\bm w}_h^{n+1}$.

At the initial step $n=0$, the weak formulation involves integrals over the auxiliary domain
$\Omega_h^{-1}$.
We initialize the scheme by setting
\[
\Omega_h^{-1} = \Omega_h^{0}, \qquad
\tilde{\bm w}_h^{0} = \bm 0, \qquad
\bm w_h^{0} = \bm 0, \qquad
\bm X_h^{0} = \mathrm{id}.
\]



\subsection{Inertial MDR Framework regularized by surface diffusion}
Initial shapes in optimization may be of low regularity (e.g., sharp corners or non-convex features),
which can impair numerical robustness. A common remedy is area penalization, i.e., augmenting the original
objective by a surface energy term; its $L^{2}$-gradient flow (mean curvature flow) smooths corners
effectively but typically leads to volume loss. By contrast, surface diffusion---the $H^{-1}$-gradient
flow of surface area---regularizes the boundary via local mass redistribution, damping high-curvature
oscillations and reducing geometric irregularities while, for closed surfaces in the continuous
setting, preserving the enclosed volume, a property often desirable in shape optimization.

In our setting, surface diffusion is employed solely to regularize the boundary motion.
We therefore apply a surface--diffusion correction to the velocity produced by the inertial
scheme and subsequently extend the resulting boundary motion into the bulk via the MDR method.
Since the geometric evolution is determined entirely by the normal boundary component, the
diffusion term is incorporated only in the normal direction.
Concretely, starting from the inertial velocity $\tilde{\bm w}$ induced by \eqref{2ndflowExp},
we prescribe in the continuous setting the following composite normal velocity:
\begin{equation}\label{cont_diffusion_descent}
\hat{\bm w} \cdot \bm n
= \tilde{\bm w} \cdot \bm n + \alpha\,(\Delta_{\Gamma} H)
\quad \text{on } \Gamma(t),
\qquad
H = -(\Delta_{\Gamma}\mathrm{id})\cdot \bm n,
\end{equation}
where $H$ denotes the mean curvature of $\Gamma(t)$ and $\mathrm{id}$ is the identity map on $\Gamma(t)$. The parameter $\alpha>0$ controls the relative strength of geometric smoothing versus the primary inertial direction. Throughout, we adopt the adaptive choice
\[
\alpha=\|\tilde{\bm w}\|_{{\bf L}^2(\Gamma(t))},
\]
so that diffusion is strongest at early stages (when $\|\tilde{\bm w}\|$ is large), which helps suppress geometric irregularities of the initial surface, and weakens automatically as the flow approaches a steady regime.

For the spatial discretization of the boundary evolution, we employ the numerical method proposed by Barrett, Garcke, and N\"urnberg (BGN)~\cite{BarrettGarckeNuernberg2007,BarrettGarckeNuernberg2008JCP}. A key feature of this approach is its tangential motion, which induces a discrete (approximate) harmonic map between consecutive surfaces and thereby helps maintain good boundary mesh quality. Specifically, we solve the following variational problem: Find $\hat{\bm w}_h^{n+1}\in (S_{h,r}^n)^d$ and $H_h^{n+1}\in S_{h,r}^n$ such that
\begin{subequations}\label{BGN-surface}
\begin{align}
&\int_{\Gamma_h^n} \hat{\bm w}_h^{n+1}\!\cdot \bm n_h^n \,\psi_h
=\int_{\Gamma_h^n} \tilde{\bm w}_h^{n+1}\!\cdot\bm n_h^n \,\psi_h
-\alpha \int_{\Gamma_h^n} \nabla_{\Gamma_h^n} H_h^{n+1}\cdot \nabla_{\Gamma_h^n}\psi_h
\quad&&\forall\, \psi_h\in S_{h,r}^n, \\
&\int_{\Gamma_h^n} H_h^{n+1}\,\bm \xi_h\!\cdot \bm n_h^n
-\int_{\Gamma_h^n} \nabla_{\Gamma_h^n} ({\rm id}+\tau \hat{\bm w}_h^{n+1})
:\nabla_{\Gamma_h^n} \bm \xi_h = 0
\quad&& \forall\,\bm \xi_h \in (S_{h,r}^n)^d,
\end{align}
\end{subequations}
where $\bm n_h^n$ denotes the piecewise unit normal on the discrete surface $\Gamma_h^n$, and $\tilde{\bm w}_h^{n+1}$ is obtained from the inertial gradient flow scheme \eqref{descent_scheme}.

Once the boundary velocity $\hat{\bm w}_h^{n+1}$ has been determined, the induced motion is extended
into the bulk domain $\Omega_h^n$ using the MDR method introduced in
Section~\ref{subsec:MDR_mesh_motion}, which provides improved mesh quality compared with the
classical harmonic extension. The only difference from the inertial--MDR scheme
\eqref{descent_scheme_MDR} without surface diffusion is that the prescribed normal boundary
velocity in \eqref{MDR_normal} is replaced by $\hat{\bm w}_h^{n+1}\!\cdot\bm n_h^n$ rather than
$\tilde{\bm w}_h^{n+1}\!\cdot\bm n_h^n$. As a result, the bulk velocity is characterized by:
\begin{subequations}\label{descent_scheme_MDR_SD}
    \begin{align}
&\int_{\Omega^{n}_h} \varepsilon({\bm w}_h^{n+1}):\varepsilon(\bm \eta_h)
=\int_{\Gamma^n_h}\kappa_h^{n+1}\,\bm n^n_h\cdot \bm \eta_h,\\
&\int_{\Gamma^n_h} {\bm w}_h^{n+1} \cdot \bm n^{n}_h\,\phi_h
=\int_{\Gamma^n_h} \bigl(\hat{\bm w}^{n+1}_h \cdot \bm n_h^n\bigr)\,\phi_h, \label{MDR_normal_SD}
\end{align}
\end{subequations}
where $\hat{\bm w}_h^{n+1}$ is obtained from the surface--diffusion--regularized boundary
prescription \eqref{BGN-surface}. The resulting inertial diffusion--regularized BGN--MDR algorithm
is summarized in Algorithm~\ref{alg2ndMDR}.

\begin{algorithm}[htbp]
\SetAlgoLined
    \caption{Inertial Diffusion-Regularized BGN--MDR Algorithm}\label{alg2ndMDR}
    \KwData{Given the terminal time $T$ and time step $\tau$, initialize the domain $\Omega^0_h$ and set $\Omega_h^{-1} = \Omega_h^{0}, 
\qquad \tilde{\bm w}_h^{0} = \bm 0, 
\qquad \bm{w}_h^{0} = \bm 0, 
\qquad \text{and} \qquad 
{\bm X}_h^{0} = \mathrm{id}$.\\ \qquad \quad \ Set the current time $t=0$ and index $n=0$.}
    \While{$t \leq T$}{
     \emph{Step 1}: Solve the governing PDE on the current domain $\Omega_h^n$, see \eqref{2ndTimeDisSchemeLinear_1_MDR}--\eqref{2ndTimeDisSchemeLinear_2_MDR}.\\ 
     \emph{Step 2}: Compute $\tilde{\bm w}^{n+1}_h$ from inertial gradient flow scheme~\eqref{2ndTimeDisSchemeLinear_0_MDR}.\\
     \emph{Step 3}: Compute the surface--diffusion--regularized boundary velocity $\hat{\bm w}^{n+1}_h$ on $\Gamma_h^n$\\
	 \hspace{35pt} by using the BGN method~\eqref{BGN-surface}.\\
	 \emph{Step 4}: Compute the domain velocity ${\bm w}^{n+1}_h$ on $\Omega_h^n$ using the MDR method~\eqref{descent_scheme_MDR_SD}.\\
     \emph{Step 5}: Update the domain by $\Omega_h^{n+1} = \bm {X}_h^{n+1}(\Omega_h^n)$ with $\bm{{X}}^{n+1}_h:={\rm id}+ \tau {\bm w}_h^{n+1}$.\\
     Set $t \leftarrow t+\tau$ and $n\leftarrow n+1$.}
     \label{alg1}
\end{algorithm}

\section{Inertial MDR method for Willmore-driven surface hole filling}

In this section, we develop a numerical scheme for the Willmore-driven surface hole filling problem, modeled by the continuous flow \eqref{willmore-flow} subject to the boundary conditions \eqref{boundary}.

The first boundary condition \eqref{boundary_1} requires the boundary curve to remain fixed throughout the evolution, i.e., $\partial \Gamma(t) = \partial \Gamma(0)$ for all $t$. Let \(\mathring{S}_{h,r}^n\) denote the subspace of \(S_{h,r}^n\) comprising functions that vanish on the boundary \(\partial \Gamma_h^n\). By seeking the discrete velocity $\Bw_{h}^{n+1}$ within $(\mathring{S}_{h,r}^n)^3$, we explicitly satisfy the condition that no displacement occurs on the boundary, thereby fulfilling \eqref{boundary_1}.

The second boundary condition \eqref{boundary_2}, which prescribes the conormal direction, will be enforced weakly. We begin with the following geometric identity, valid for smooth surfaces (derived via integration by parts and the relation $H = -(\Delta_{\Gamma} {\rm\text{id}}) \cdot \bm n$):
\begin{align}\label{willmore-identity}
    \int_{\Gamma(t)} H \chi 
    = \int_{\Gamma(t)} \nabla_{\Gamma(t)} {\rm \text{id}} : \nabla_{\Gamma(t)} (\chi \bm n)  
    + \int_{\partial \Gamma(t)} {\bm \mu}(t) \cdot \bm n(t)\,\chi,
\end{align}
for all $\chi \in H^1(\Gamma(t))$. Here, $\bm\mu(t)$ denotes the conormal vector along $\partial\Gamma(t)$. Although the boundary term vanishes in the continuous setting due to the orthogonality condition $\bm\mu(t) \perp \bm n(t)$, we explicitly retain it to impose the boundary constraint.

Specifically, we propose the following variational formulation for mean curvature:
\begin{align}\label{willmore-identity-2}
    \int_{\Gamma(t)} H \chi 
    = \int_{\Gamma(t)} \nabla_{\Gamma(t)} {\rm id} : \nabla_{\Gamma(t)} (\chi \bm n)  
    + \int_{\partial \Gamma(0)} {\bm \mu}(0) \cdot \bm n(t)\,\chi.
\end{align}
Comparing the identity \eqref{willmore-identity} with our formulation \eqref{willmore-identity-2}, and utilizing the fact that $\partial \Gamma(t) = \partial \Gamma(0)$, we deduce that enforcing \eqref{willmore-identity-2} implies
\begin{align}
    \int_{\partial \Gamma(0)} {\bm \mu}(t) \cdot \bm n(t)\,\chi 
    = \int_{\partial \Gamma(0)} {\bm \mu}(0) \cdot \bm n(t)\,\chi.
\end{align}
Since this equality holds for arbitrary test functions, we have ${\bm \mu}(t) \cdot \bm n(t) = {\bm \mu}(0) \cdot \bm n(t)$ almost everywhere on the boundary. Furthermore, as the boundary curve is fixed, both ${\bm \mu}(t)$ and ${\bm \mu}(0)$ are orthogonal to the boundary tangent $\bm \xi$. Consequently, this implies that ${\bm \mu}(t) = {\bm \mu}(0)$. Provided that the initial data is compatible (i.e., ${\bm \mu}(0) = \bm \mu_{\rm out}$), the second boundary condition \eqref{boundary_2} is satisfied.

A specific numerical challenge arises in the discretization of \eqref{willmore-identity-2}, as the discrete unit normal $\bm n_h^n$ is typically discontinuous across element interfaces and thus lacks the global $H^1$-regularity required to define the surface gradient. To address this issue, we introduce an averaged normal field $\bar{\bm n}_h^n \in (S_{h,r}^n)^3$, defined as the $L^2$-projection of $\bm n_h^n$ onto the finite element space:
\begin{align}\label{normal}
    \int_{\Gamma_h^n} \bar{\bm n}_h^n \cdot \bm \chi_{\bm n} 
    = \int_{\Gamma_h^n} \bm n_h^n \cdot \bm\chi_{\bm n} 
    \quad \text{for all } \bm \chi_{\bm n} \in (S_{h,r}^n)^3.
\end{align}
By construction, $\bar{\bm n}_h^n$ belongs to $\mathbf{H}^1(\Gamma_h^n)$, which ensures that the
subsequent weak formulation is well defined.
Moreover, this projection-based treatment improves numerical robustness by mitigating
the long-time accumulation of geometric errors, without introducing additional evolution
equations for geometric quantities.


Building upon the inertial--MDR framework for PDE--constrained shape optimization developed earlier,
we develop a linear fully discrete scheme for Willmore flow on open surfaces with a fixed boundary.
The method incorporates a second--order inertial term to accelerate the evolution and couples it
with the minimal deformation rate (MDR) principle to maintain mesh quality without remeshing.

Since the geometric evolution of a surface is governed entirely by its normal velocity,
the tangential component does not affect the shape.
Accordingly, the normal velocity for the Willmore flow satisfies
\begin{align}\label{willmore-flow-normal}
\bm w \cdot \bm n_{\Gamma(t)}
= \Big(\Delta_{\Gamma(t)} H_{\Gamma(t)} 
- \tfrac12\,H_{\Gamma(t)}\big(H_{\Gamma(t)}^{2}
-2|\nabla_{\Gamma(t)} \bm n_{\Gamma(t)}|^2\big)\Big)
\quad \text{on } \Gamma(t).
\end{align}
To incorporate inertial effects, we replace the above first--order normal--velocity relation
with its inertial counterpart:
\begin{align}\label{willmore-flow-normal-inertial}
\big[\epsilon_0 \big(\partial_t^{\bullet} \bm w 
+ \tfrac12 (\nabla_{\Gamma(t)}\cdot \bm w)\bm w\big) + \bm w \big]
\cdot \bm n_{\Gamma(t)}
= \Big(\Delta_{\Gamma(t)} H_{\Gamma(t)} 
- \tfrac12\,H_{\Gamma(t)}\big(H_{\Gamma(t)}^{2}
-2|\nabla_{\Gamma(t)} \bm n_{\Gamma(t)}|^2\big)\Big).
\end{align}

To preserve mesh quality during the evolution, the tangential component of the velocity
is determined by minimizing the deformation--rate energy
\[
\int_{\Gamma(t)} |\varepsilon(\bm w)|^2,
\qquad
\varepsilon(\bm w):=\tfrac12\bigl(\nabla_{\Gamma(t)} \bm w
+(\nabla_{\Gamma(t)} \bm w)^{\top}\bigr),
\]
subject to the normal constraint \eqref{willmore-flow-normal-inertial}.
The resulting Euler--Lagrange equation is analogous to \eqref{euler-haha} and leads to the
variational condition \eqref{tangent-vel} in the fully discrete scheme.

The fully discrete linear scheme is defined as follows: Find
\[
(\bm w_h^{n+1}, \kappa_h^{n+1}, H_h^{n+1})
\in (\mathring{S}_{h,r}^n)^3 \times \mathring{S}_{h,r}^n \times S_{h,r}^n
\]
such that
\begin{subequations}\label{numer-willmore}
\begin{align}
&\frac{\bm X_h^{n+1} - \mathrm{id}}{\tau} = \bm w_h^{n+1} \qquad \text{on } \Gamma_h^n,\label{position}\\
&\epsilon_0\int_{\Gamma_h^n} \frac{\bm w_h^{n+1}\cdot \bar{\bm n}_h^n}{2\tau} \chi_v 
+ \epsilon_0\int_{\Gamma_h^{n-1}} \frac{(\bm w_h^{n+1}\cdot \bar{\bm n}_h^n) \circ \bm X^n_h }{2\tau} (\chi_v \circ \bm X^n_h) 
+ \int_{\Gamma_h^n}(\bm w_h^{n+1} \cdot \bar{\bm n}_h^n) \chi_v \notag\\
&\quad + \int_{\Gamma_h^n}\nabla_{\Gamma_h^n} H_h^{n+1}\cdot \nabla_{\Gamma_h^n} \chi_v 
= \epsilon_0\int_{\Gamma_h^{n-1}} \frac{\bm w_h^{n}\cdot \bar{\bm n}_h^{n-1}}{\tau} (\chi_v \circ \bm X^n_h)\notag\\
& \quad + \int_{\Gamma_h^n} \left((H_h^n)^3 - \frac{3}{2} (H_h^n)^2 H_h^{n+1} 
+|\nabla_{\Gamma_h^n} \bar{\bm n}_h^n|^2 H_h^{n+1}\right) \chi_v, \label{velocity-eqn} \\
&\int_{\Gamma_h^n} \varepsilon^n (\bm{w}_h^{n+1}): \varepsilon^n (\bm{\chi}_\kappa) 
= \int_{\Gamma_h^n} \kappa_h^{n+1} \bar{\bm n}_h^n \cdot \bm{\chi}_\kappa, \label{tangent-vel} \\
&\int_{\Gamma_h^n} H_h^{n+1} \chi_H 
= \int_{\Gamma_h^n} \nabla_{\Gamma_h^n} (\mathrm{id} + \tau \bm w_h^{n+1}): \nabla_{\Gamma_h^n} (\chi_H \bar{\bm n}_h^n) 
+ \int_{\partial \Gamma_h^0} \bm{\mu}_{\rm out} \cdot \bar{\bm n}_h^n \, \chi_H, \label{curvature-eqn}
\end{align}
\end{subequations}
for all test functions \((\bm \chi_\kappa, \chi_v, \chi_H) \in (\mathring{S}_{h,r}^n)^3 \times \mathring{S}_{h,r}^n\times {S}_{h,r}^n\). In \eqref{tangent-vel}, the strain tensor is defined as $\varepsilon^n(\bm v):=\tfrac12\bigl(\nabla_{\Gamma_h^n} \bm v +(\nabla_{\Gamma_h^n} \bm v)^{\top}\bigr)$.
Upon solving the above linear system, the discrete surface is advanced by the discrete flow map
\(\bm X_h^{n+1}\), defined through
\[
\Gamma_h^{n+1} := \bm X_h^{n+1}(\Gamma_h^n)
= (\mathrm{id} + \tau \bm w_h^{n+1})(\Gamma_h^n),
\]
with the convention \(\Gamma_h^{-1} := \Gamma_h^0\).
To initialize the scheme, we prescribe \(\bm w_h^0 = \bm 0\) and \(\bm X_h^0 = \mathrm{id}\).
The discrete mean curvature \(H_h^0\) on the initial surface \(\Gamma_h^0\) is then obtained from the weak formulation
\begin{align}\label{known_curvature}
    \int_{\Gamma_h^0} H_h^0 \psi_H
    = \int_{\Gamma_h^0} \nabla_{\Gamma_h^0} \cdot \bar{\bm n}_{h}^0 \,\psi_H,
    \quad \forall\, \psi_H \in S_{h,r}^0.
\end{align}

Equation~\eqref{velocity-eqn} governs the evolution of the normal velocity, balancing inertial
effects (scaled by~$\epsilon_0$) with the linearized Willmore driving force. The tangential
component is selected implicitly through the MDR principle~\eqref{tangent-vel} to preserve mesh
quality. Finally,~\eqref{curvature-eqn} closes the system by updating the mean curvature and
simultaneously enforcing the prescribed conormal boundary condition~\eqref{boundary_2} in a weak
sense, which is the discrete analogue of~\eqref{willmore-identity-2}. 
In practice, the evolution is terminated once the objective functional becomes numerically
stationary, that is, when its relative variation over successive time steps falls below a
prescribed tolerance, indicating convergence to a steady configuration.

\begin{remark}\upshape
The constraint \(\bm w_h^{n+1}\in (\mathring{S}_{h,r}^n)^3\) enforces the fixed-boundary condition
\eqref{boundary_1}, while the conormal condition \eqref{boundary_2} is imposed weakly through \eqref{curvature-eqn}. This weak enforcement is robust with respect to
incompatible initial data (i.e., \(\bm \mu(0)\neq \bm \mu_{\rm out}\)), allowing the discrete
conormal to relax toward the target \(\bm \mu_{\rm out}\) during the evolution. This feature is
particularly useful for surface hole filling, as it permits simple initial guesses \(\Gamma(0)\)
that need not satisfy the geometric boundary compatibility.
\end{remark}


\section{Numerical experiment}\label{sec:numerics}
The PDE-constrained shape optimization problems are solved numerically using the inertial BGN--MDR scheme described in Algorithm~\ref{alg1}. We consider three benchmark shape optimization problems: Example~1 (shape reconstruction), Example~2 (drag minimization in Stokes flow), and Example~3 (elliptic eigenvalue minimization). In addition, Example~4 (surface hole filling) is included to illustrate the performance of the proposed inertial--MDR scheme for Willmore-flow-driven surface reconstruction. Taken together, these experiments demonstrate that the inertial--MDR framework achieves lower
original objective values \(J(\Gamma)=\int_{\Omega} j(x,u)\), converges more rapidly, and produces higher--quality optimized geometries with
improved mesh quality compared with classical numerical methods.

\subsection{Example 1 (Shape reconstruction)}
We investigate the shape reconstruction problem~\eqref{shapeOptPoisson} in both two and three spatial dimensions, considering initial geometries that are either smooth or non-smooth, as well as convex or non-convex.

\noindent
\textbf{Case 1 (Inertial gradient flow).}
The desired state is prescribed as
\[
u_d(x_1,x_2) = 1 - 0.8x_1^2 - 2.25x_2^2.
\]
The initial domain is a unit disk centered at the origin, discretized into 2{,}842 triangular elements. The time step size and final time are chosen as \(\tau = 0.02\) and \(T = 8\), respectively. As shown in Fig.~\ref{Exp2case2}, the second--order (inertial) gradient flow with \(\epsilon_0=1\) provides a noticeably better approximation of the target elliptical shape than the classical \(H^1\) gradient flow. The convergence histories (Fig.~\ref{Exp2case2}, right) show that the inertial flow converges rapidly, reducing the original objective value to \(10^{-7}\). In contrast, the \(H^1\) gradient flow converges much more slowly and stagnates on a plateau, with the objective value only decreasing to \(10^{-3}\), which is insufficient for accurately recovering the target shape. Moreover, for the inertial flow, the mechanical energy defined in \eqref{modified-energy} decays monotonically, as shown in Fig.~\ref{Exp2case2}, right.

\noindent
\textbf{Case 2 (Inertial BGN--MDR method in 2D).}
The initial domain is a non-convex and non-smooth L-shaped region
\[
\Omega^0 = [-1,1]^2 \setminus \bigl((0,1)\times(0,1)\bigr).
\]
discretized into 1{,}484 triangular elements. The desired state is given by
\[
u_d(x_1,x_2) = 1 - x_1^2 - 2.25x_2^2.
\]
All remaining parameters are identical to those in Case~1. The inertial BGN--MDR method yields a high-quality reconstructed shape and improves the mesh quality compared with the BGN--harmonic extension, in particular by preventing severe element distortion near the re-entrant corners; see Fig.~\ref{Exp1Case3MDRIniOpt}. 

\noindent
\textbf{Case 3 (Inertial BGN--MDR method in 3D).}
The initial domain is the cuboid
\[
\Omega^0 = [-0.75,0.75]\times[-0.5,0.5]\times[-0.5,0.5].
\]
We set \(\tau = 0.1\), \(\epsilon_0 = 1\), and \(T = 10\). The desired state is prescribed as
\[
u_d(x_1,x_2,x_3) = 1 - x_1^2 - 2.25x_2^2 - 2.25x_3^2.
\]
As illustrated in Fig.~\ref{Exp23d}, the optimized shape obtained by the inertial BGN--MDR method (regularized by surface diffusion) exhibits a substantially smoother surface than that produced by the classical \(H^1\) gradient flow or inertial MDR method. Quantitatively, the original objective value achieved by the inertial BGN--MDR method is \(6.8\times10^{-6}\), which is substantially smaller than the value \(1.4\times10^{-5}\) attained by the inertial MDR flow and the value \(1.7\times 10^{-3}\) attained by the classic $H^1$ gradient flow.

\begin{figure}[htbp]
    \centering
    \begin{minipage}[t]{0.33\textwidth}
        \centering
        \includegraphics[width=1.75in]{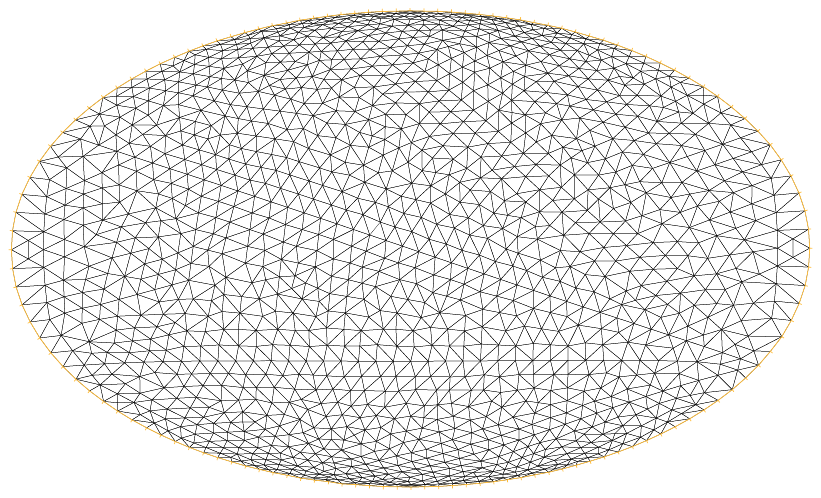}
    \end{minipage}%
    \begin{minipage}[t]{0.33\textwidth}
        \centering
        \includegraphics[width=1.75in]{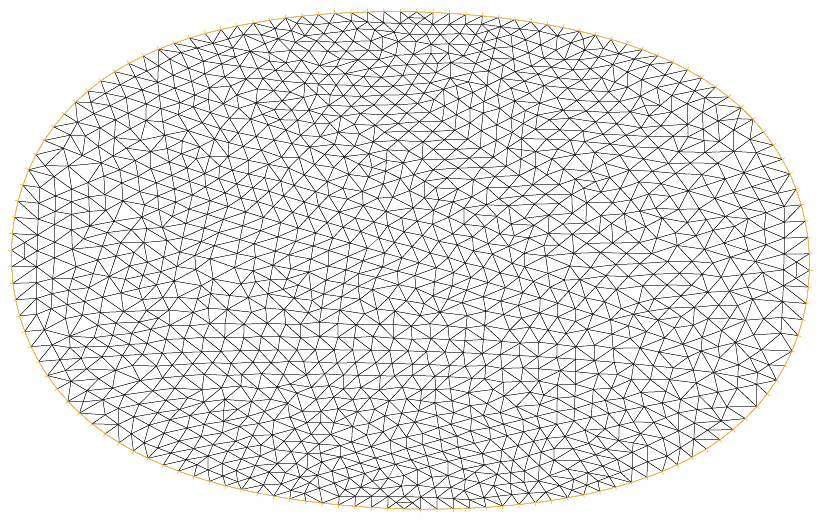}
    \end{minipage}%
    \begin{minipage}[t]{0.33\textwidth}
        \centering
        \includegraphics[width=2.0in]{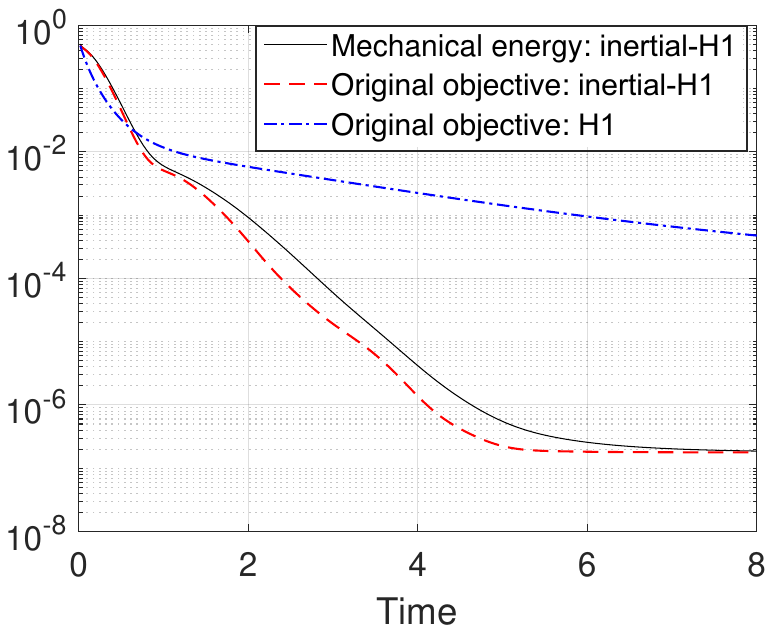}
    \end{minipage}%
    \caption{Optimal shapes produced by the inertial \(H^1\) gradient flow (left) and the classical \(H^1\) gradient flow (middle), together with the corresponding original objective and mechanical energy convergence histories (right), for Example~1 (Case~1).}
    \label{Exp2case2}
\end{figure}

\begin{figure}[htbp]
    \centering
    \begin{minipage}[t]{0.24\textwidth}
        \centering
        \includegraphics[width=1.2in]{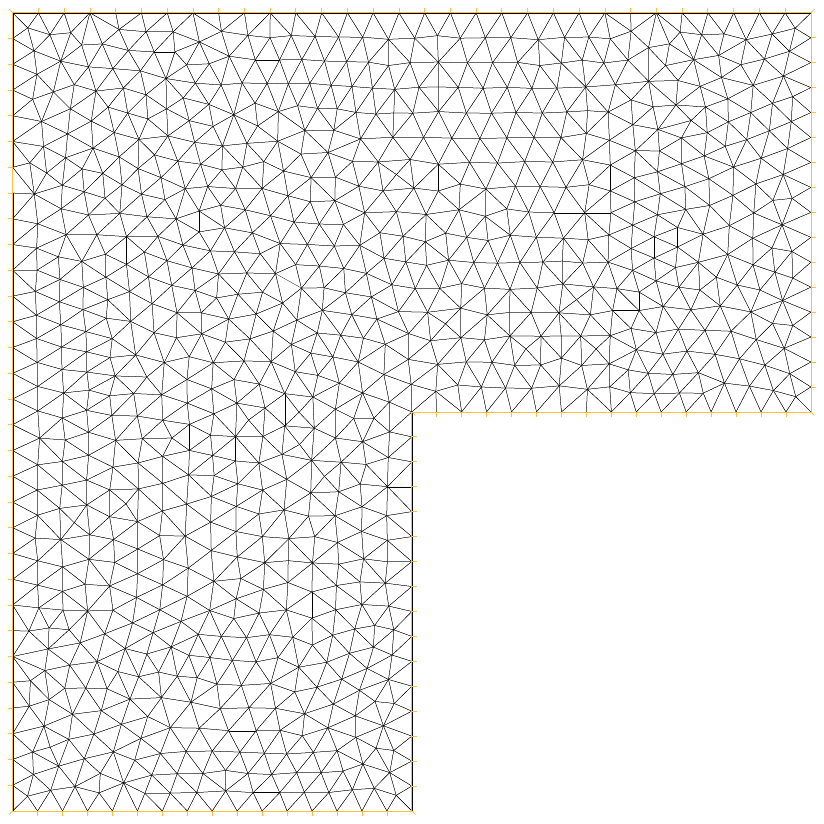}
    \end{minipage}%
    \begin{minipage}[t]{0.24\textwidth}
        \centering
        \includegraphics[width=1.3in]{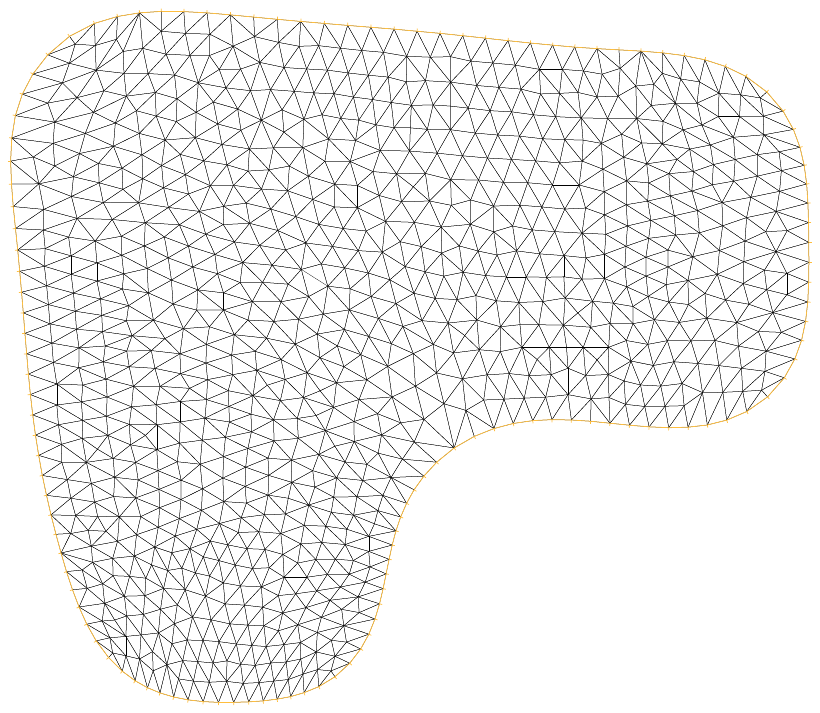}
    \end{minipage}%
    \begin{minipage}[t]{0.24\textwidth}
        \centering
        \includegraphics[width=1.5in]{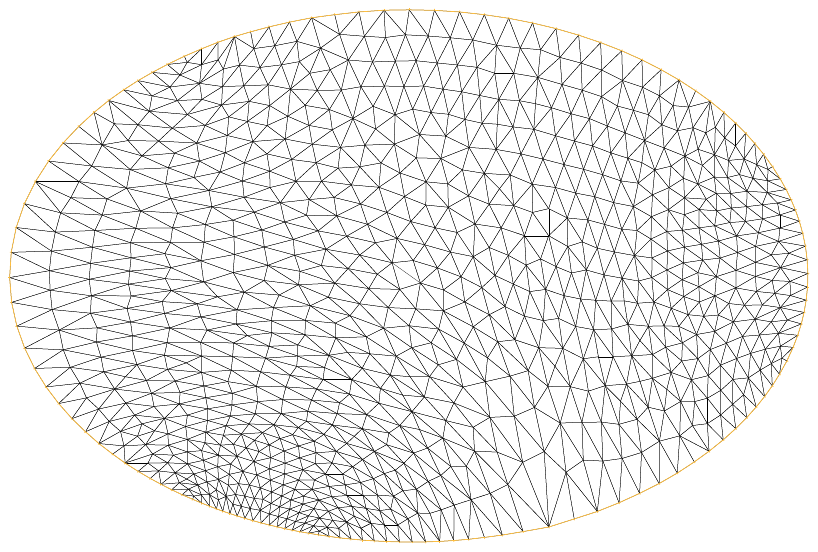}
    \end{minipage}%
    \begin{minipage}[t]{0.27\textwidth}
        \centering
        \includegraphics[width=1.5in]{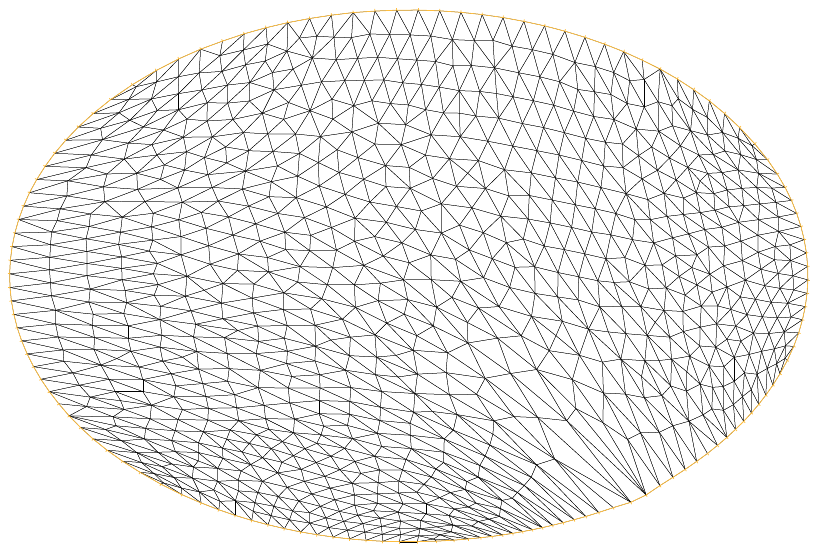}
    \end{minipage}%
    \caption{Mesh evolution for Example~1 (Case~2): initial mesh (left), intermediate stage (middle left), final optimized mesh via the inertial BGN--MDR flow (middle right), and the BGN--harmonic extension (right).}
    \label{Exp1Case3MDRIniOpt}
\end{figure}

\begin{figure}[htbp]
    \centering
    \begin{minipage}[t]{0.33\textwidth}
        \centering
        \includegraphics[width=1.8in]{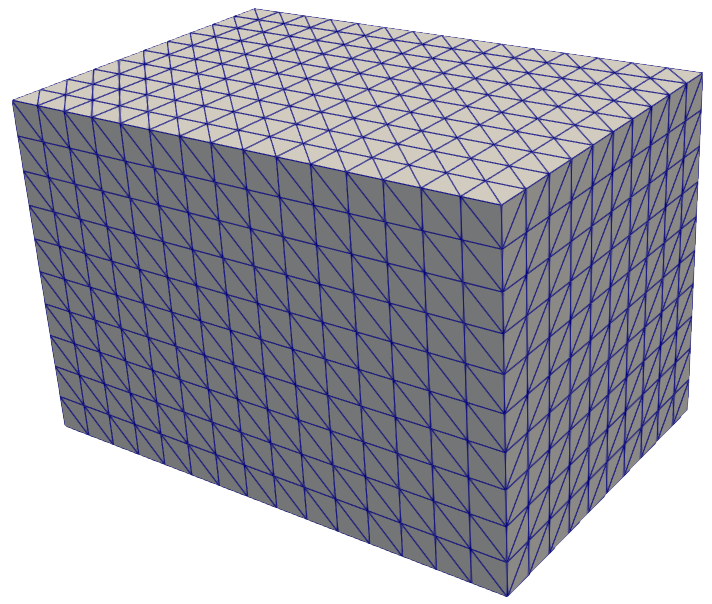}
    \end{minipage}%
    \begin{minipage}[t]{0.33\textwidth}
        \centering
        \includegraphics[width=1.8in]{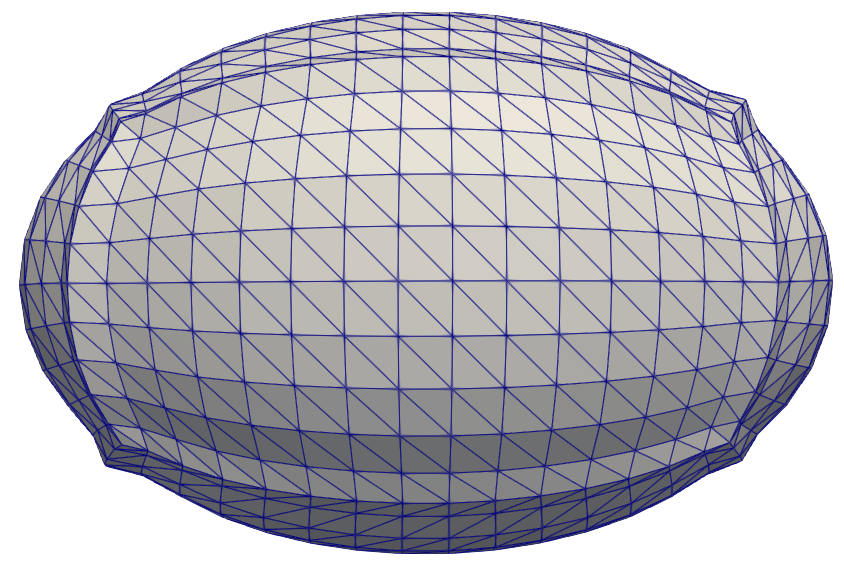}
    \end{minipage}%
    \begin{minipage}[t]{0.33\textwidth}
        \centering
        \includegraphics[width=1.8in]{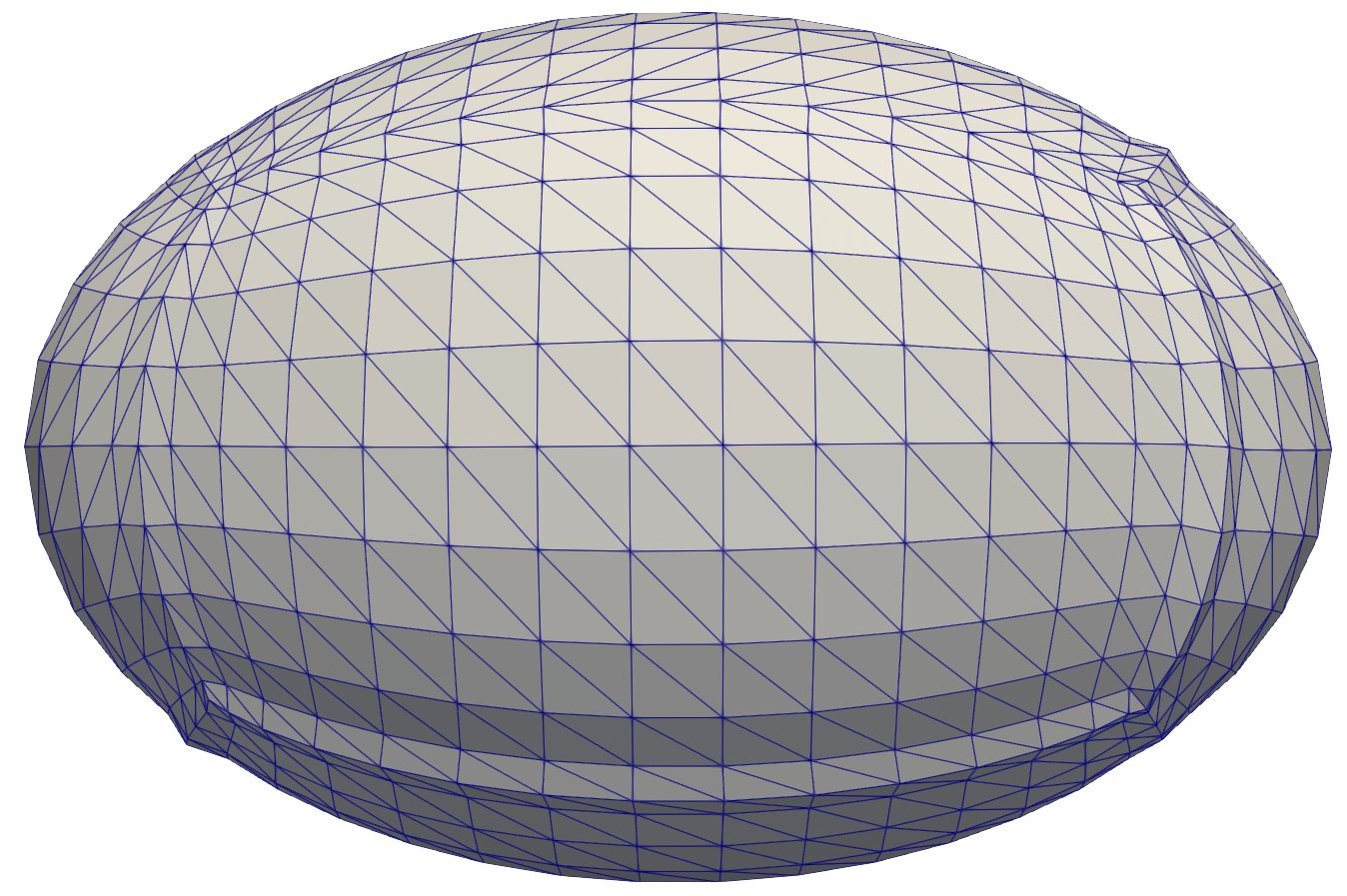}
    \end{minipage}%
    \\
    \begin{minipage}[t]{0.5\textwidth}
        \centering
        \includegraphics[width=1.8in]{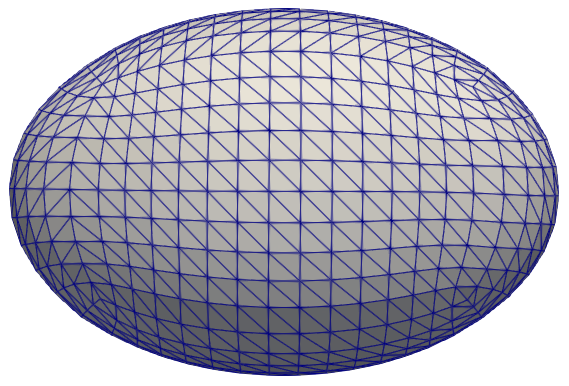}
    \end{minipage}%
    \begin{minipage}[t]{0.5\textwidth}
        \centering
        \includegraphics[width=1.8in]{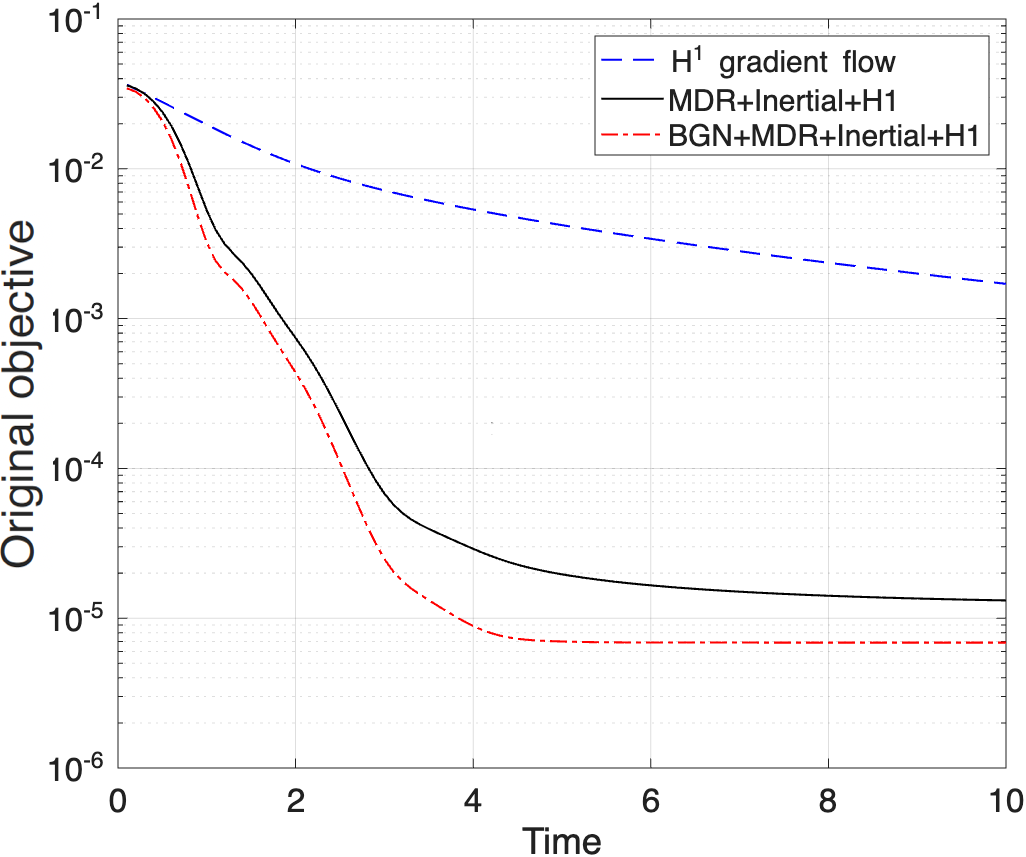}
    \end{minipage}%
    \caption{Example~1 (Case~3): Row 1 shows initial mesh (left), optimized mesh obtained by the \(H^1\) gradient flow (middle), and optimized mesh obtained by the inertial MDR method (right); Row 2 shows optimized mesh obtained by inertial BGN--MDR method (left) and convergence histories of original objective (right).}
    \label{Exp23d}
\end{figure}

\subsection{Example 2 (Drag minimization in Stokes flow)}
The optimal obstacle for drag minimization typically develops sharp tips at the leading and trailing edges, while the remaining part of the boundary stays smooth. In \eqref{StokesEqu}, we set the body force to \(\bm f \equiv \bm 0\).

\noindent
\textbf{Case 1 (Non-smooth initial shape).}
The initial design consists of a rectangular domain \(\Omega^0=[-0.5,1.5]\times[-0.5,0.5]\) containing a rectangular obstacle \([-0.2,0.2]\times[-0.15,0.15]\) removed from the fluid region; see Fig.~\ref{Stokes2dDrag} (left). The inflow profile on \(\Gamma_i\) is prescribed as
\[
\bm u_d = \bigl(-(x_2-0.5)(x_2+0.5),\,0\bigr)^{\top}.
\]
We choose \(\tau=0.005\), \(T=0.2\), and \(\mu=1\). Figure~\ref{Stokes2dDrag} shows the shape evolution computed by the inertial BGN--MDR method (regularized by surface diffusion). While the sharp corners of the initial rectangular obstacle are smoothed out at intermediate times, the optimized configuration develops pronounced tips at the leading and trailing edges, which promotes a streamlined flow pattern; see Fig.~\ref{StokesVelObj2d} (left). As reported in Fig.~\ref{StokesVelObj2d} (middle), the inertial BGN--MDR method achieves a lower original objective value than the classical \(H^1\) gradient flow. Moreover, Fig.~\ref{StokesVelObj2d} (right) shows the time history of the mechanical energy, which
exhibits a stable decay in our experiments.


\noindent
\textbf{Case 2 (Obstacle design in 3D).}
The initial obstacle is an ellipsoid placed inside the cuboidal design domain \([0,1.5]\times[0,1]\times[0,1]\); see Fig.~\ref{StokesDrag3d} (left). We apply the inertial BGN--MDR method (regularized by surface diffusion) with \(\tau=0.001\), \(T=0.05\), \(\epsilon_0=0.1\), and a constant inflow \(\bm u_d=(0.5,0,0)^{\top}\). The resulting optimized obstacle exhibits sharp tips at the leading and trailing edges while the rest of the surface remains smooth; see Fig.~\ref{StokesDrag3d} (right). Throughout the computation, the mesh quality is well maintained, illustrating the effectiveness of the inertial BGN--MDR method for three-dimensional drag minimization.

\begin{figure}[htbp]
    \centering
    \begin{minipage}[t]{0.33\textwidth}
        \centering
        \includegraphics[width=1.95in]{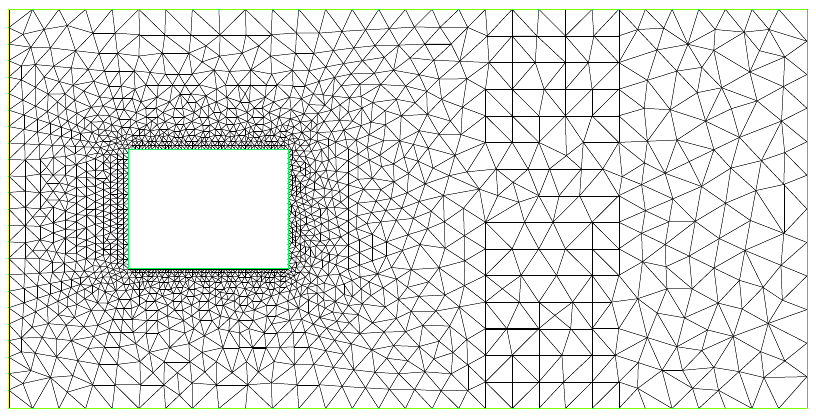}
    \end{minipage}%
    \begin{minipage}[t]{0.33\textwidth}
        \centering
        \includegraphics[width=1.95in]{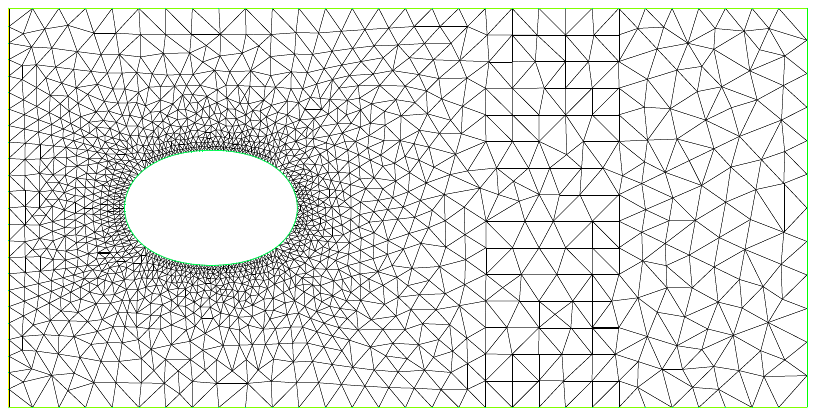}
    \end{minipage}%
    \begin{minipage}[t]{0.33\textwidth}
        \centering
        \includegraphics[width=1.95in]{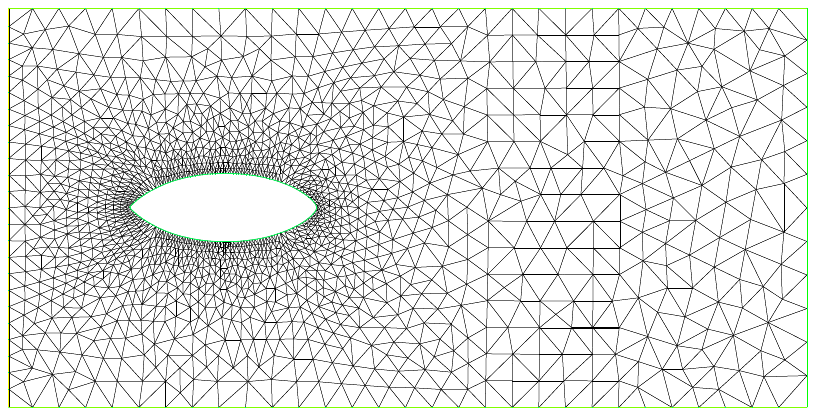}
    \end{minipage}%
    \caption{Example~2 (Case~1): initial mesh (left), intermediate mesh (middle), and optimized mesh (right) obtained by the inertial BGN--MDR method.}
    \label{Stokes2dDrag}
\end{figure}

\begin{figure}[htbp]
    \centering
    \begin{minipage}[t]{0.33\textwidth}
        \centering
        \includegraphics[width=2.2in]{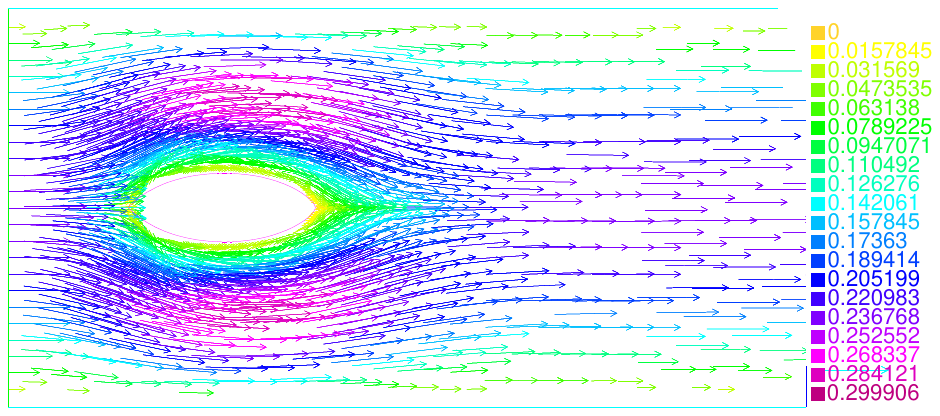}
    \end{minipage}%
    \begin{minipage}[t]{0.33\textwidth}
        \centering
        \includegraphics[width=1.8in]{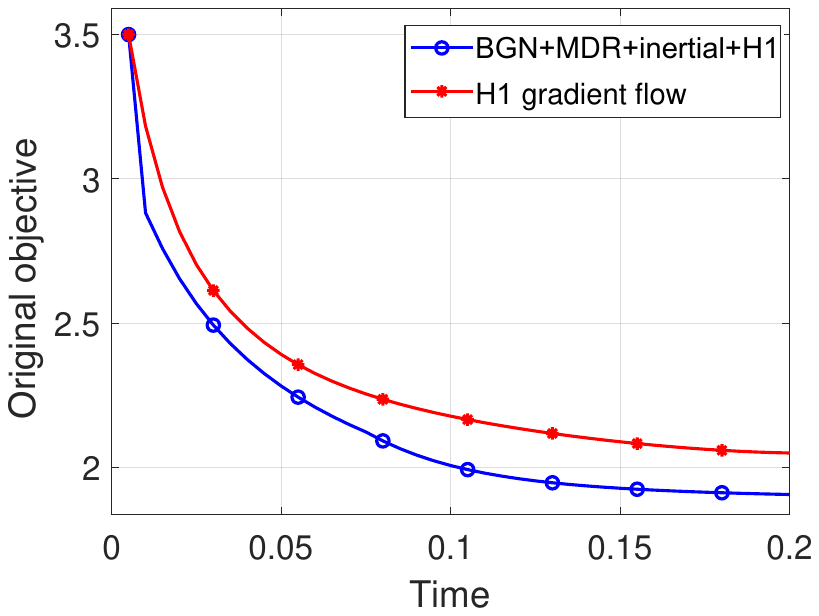}
    \end{minipage}%
    \begin{minipage}[t]{0.33\textwidth}
        \centering
        \includegraphics[width=1.8in]{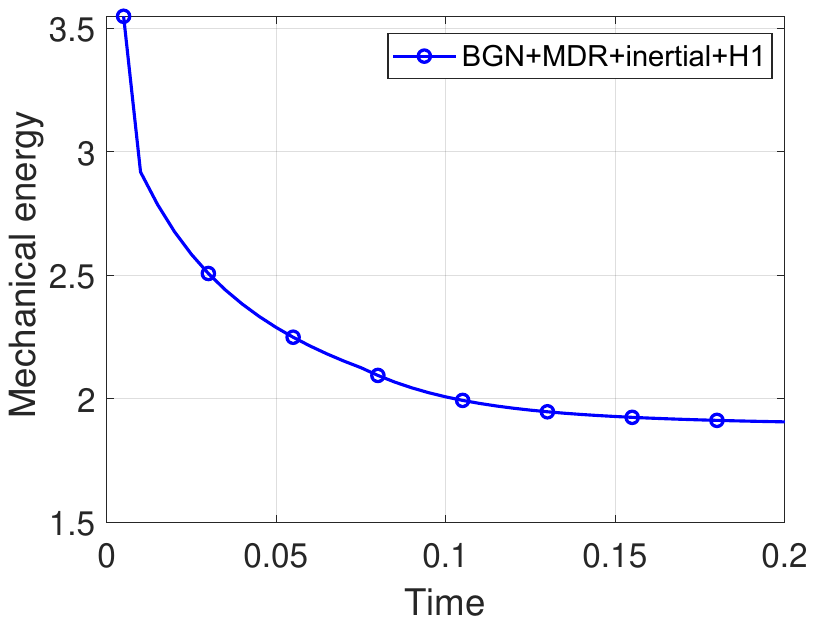}
    \end{minipage}%
    \caption{Example~2 (Case~1): velocity field around the optimized obstacle (left), and convergence histories of the original objective (middle) and mechanical energy (right).}
    \label{StokesVelObj2d}
\end{figure}

\begin{figure}[htbp]
    \centering
    \begin{minipage}[t]{0.5\textwidth}
        \centering
        \includegraphics[width=1.7in]{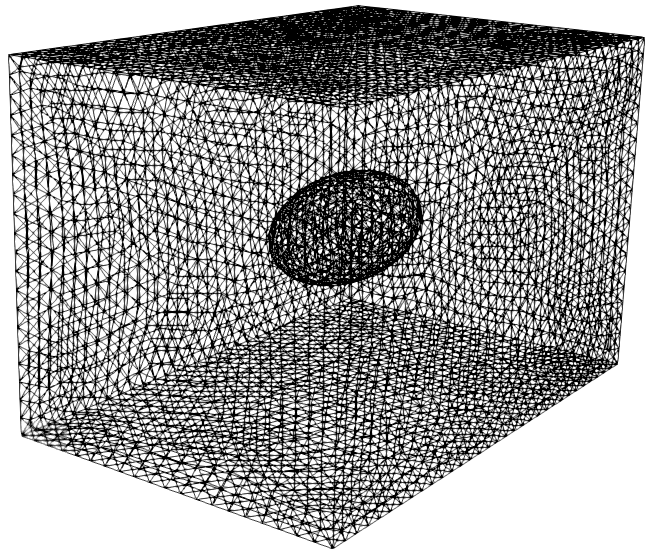}
    \end{minipage}%
    \begin{minipage}[t]{0.5\textwidth}
        \centering
        \includegraphics[width=2.5in]{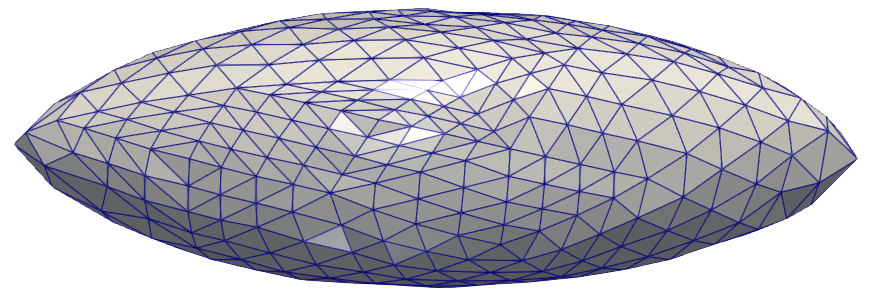}
    \end{minipage}%
    \caption{Example~2 (Case~2): initial obstacle (left) and optimized obstacle (right) in the three-dimensional drag minimization.}
    \label{StokesDrag3d}
\end{figure}

\subsection{Example 3 (Elliptic eigenvalue minimization)}
We consider the minimization of the $\ell$-th eigenvalue $\lambda_\ell$ of the elliptic operator
$-\Delta$ with homogeneous Dirichlet boundary conditions, for $\ell\in\{1,2,3,6\}$, under the
volume constraint $|\Omega|=1$, in both two and three spatial dimensions.

\noindent
\textbf{Case 1 (Minimizing the first eigenvalue in 2D).}
The initial domain is the unit square $\Omega^0=[0,1]^2$, discretized into 956 triangular elements.
It is well known that, under a volume constraint, the minimizer of $\lambda_1$ is a disk.
Figure~\ref{Eig1st2d} compares the optimized shapes obtained with the inertial BGN--MDR method
regularized by surface diffusion (left) and the classical $H^1$ gradient flow (middle).
The convergence history of the eigenvalue (original objective) shown in Fig.~\ref{Eig1st2d} (right) shows that the proposed inertial
method converges faster than the classical $H^1$ gradient flow. Moreover, the inertial BGN--MDR
method produces an optimized shape with a smooth boundary and a quasi-uniform mesh, whereas the
$H^1$ gradient flow yields a visibly non-smooth boundary with pronounced mesh clustering near the
four corners.

\noindent
\textbf{Case 2 (Minimizing eigenvalues in 3D).}
For the minimization of \(\lambda_1\) in three dimensions, the initial domain \(\Omega^0\) is chosen as a non-convex and non-smooth geometry, namely a cube with a smaller cubic corner removed; see Fig.~\ref{Eig3d1st} (left). The theoretical minimizer of \(\lambda_1\) under \(|\Omega|=1\) is the unit ball. The \(H^1\) gradient flow results in an optimized configuration with a rough surface and a highly non-uniform mesh (Fig.~\ref{Eig3d1st}, middle). In contrast, the inertial BGN--MDR method (regularized by surface diffusion) produces a shape with a smooth surface and a high-quality mesh (Fig.~\ref{Eig3d1st}, right).

The inertial BGN--MDR method also successfully computes optimized shapes for higher eigenvalues \(\ell=2,3,6\); see Fig.~\ref{Eig3dOptShapes} and Fig.~\ref{Eig3d2ndH1BGN}. Table~\ref{Eigenvalues3d} reports the corresponding optimized eigenvalues and shows good agreement with the benchmark values in~\cite{Antunes2017}, which supports the accuracy and robustness of Algorithm~\ref{alg1}. For the minimization of \(\lambda_2\), Fig.~\ref{Eig3d2ndH1BGN} compares the results obtained by the \(H^1\) gradient flow (left), the BGN--harmonic extension (middle), and the inertial BGN--MDR method (right). The \(H^1\) gradient flow exhibits severe mesh non-uniformity, with elements clustering near the initial corners and edges, leading to a highly non-smooth interface between the two spherical components. While the BGN--harmonic extension alleviates this distortion, the mesh remains insufficiently quasi-uniform, and the resulting surface displays a faceted, polyhedral appearance. By contrast, the inertial BGN--MDR method delivers the best overall performance, producing an optimized shape with a smooth surface together with a high-quality mesh.

\begin{figure}[htbp]
    \centering
    \begin{minipage}[t]{0.33\textwidth}
        \centering
        \includegraphics[width=1.5in]{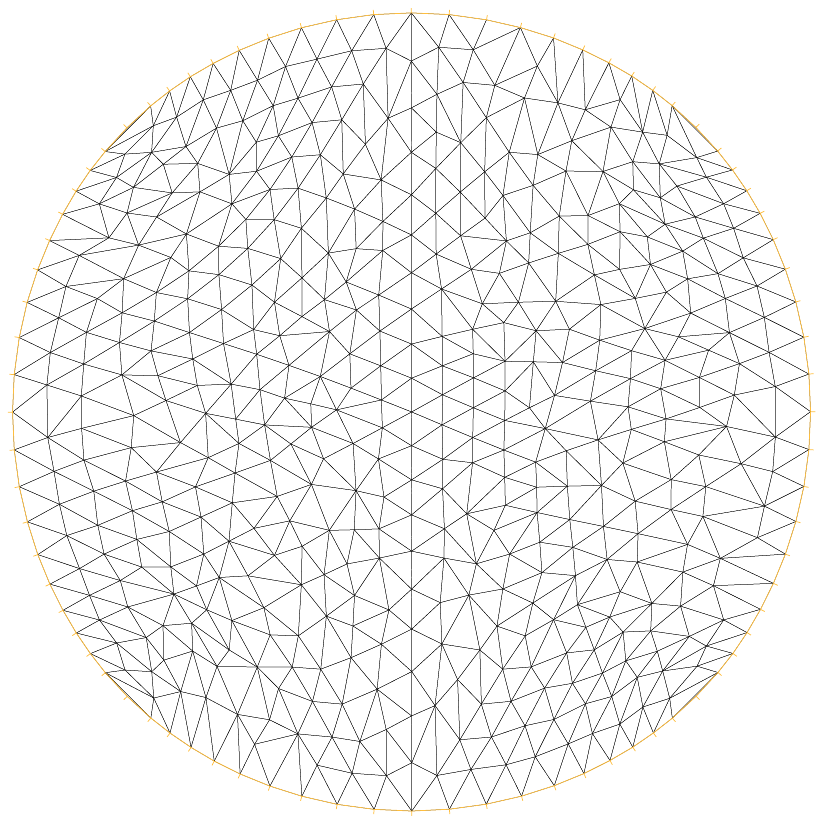}
    \end{minipage}%
    \begin{minipage}[t]{0.33\textwidth}
        \centering
        \includegraphics[width=1.5in]{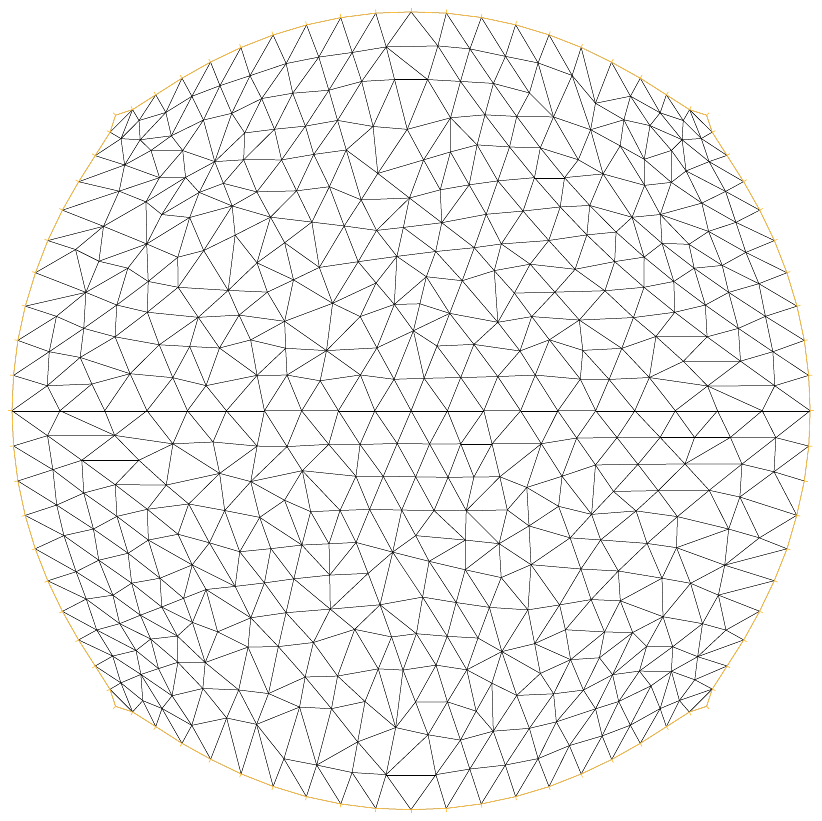}
    \end{minipage}%
    \begin{minipage}[t]{0.33\textwidth}
        \centering
        \includegraphics[width=2.0in]{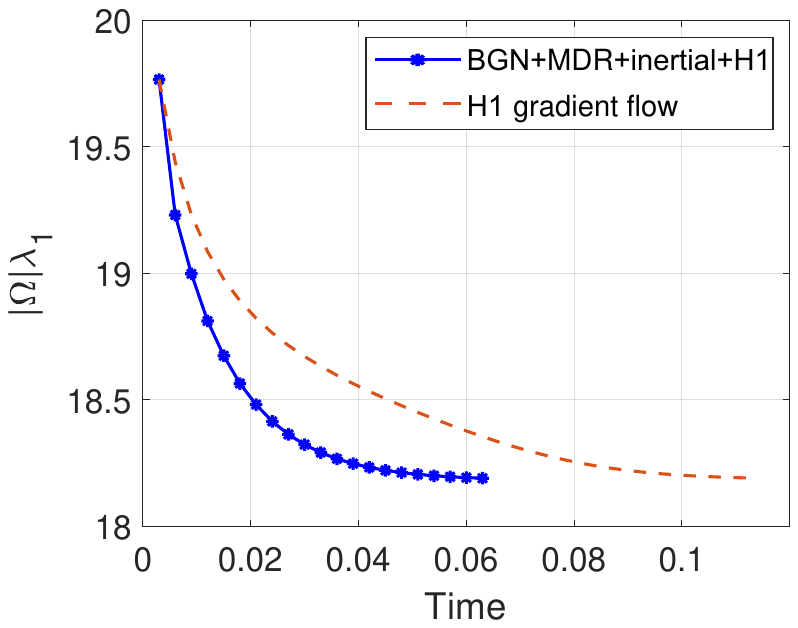}
    \end{minipage}%
    \caption{Example~3 (Case~1): optimized shapes computed by the inertial BGN--MDR method (left) and the \(H^1\) gradient flow (middle), with the convergence history of the original objective (right).}
    \label{Eig1st2d}
\end{figure}

\begin{table}[htbp]
    \centering
    \begin{tabular}{ | c | c | c || c | c | c |}
        \hline
        \(\ell\) & \(\lambda_\ell\) & \(\lambda_\ell\) \cite{Antunes2017}
        & \(\ell\) & \(\lambda_\ell\) & \(\lambda_\ell\) \cite{Antunes2017} \\ \hline
        1 & \textbf{25.96} & 25.65 & 3 & \textbf{49.94} & 49.17 \\ \hline
        2 & \textbf{41.27} & 40.72 & 6 & \textbf{72.31} & 73.05 \\ \hline
    \end{tabular}
    \caption{Example~3 (Case~2): comparison of optimized eigenvalues in three dimensions.}
    \label{Eigenvalues3d}
\end{table}

\begin{figure}[htbp]
    \centering
    \begin{minipage}[t]{0.33\textwidth}
        \centering
        \includegraphics[width=1.5in]{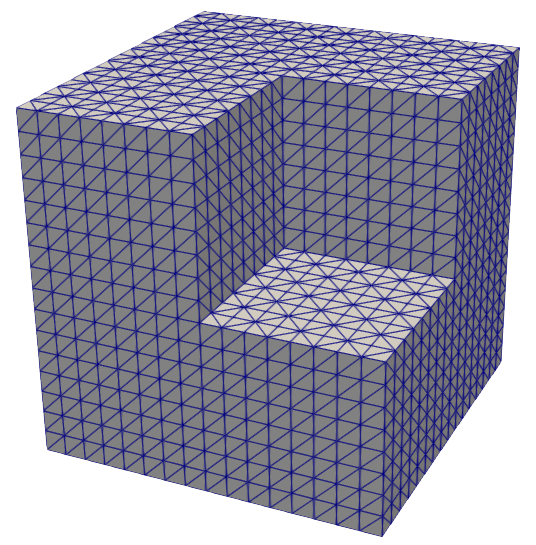}
    \end{minipage}%
    \begin{minipage}[t]{0.33\textwidth}
        \centering
        \includegraphics[width=1.5in]{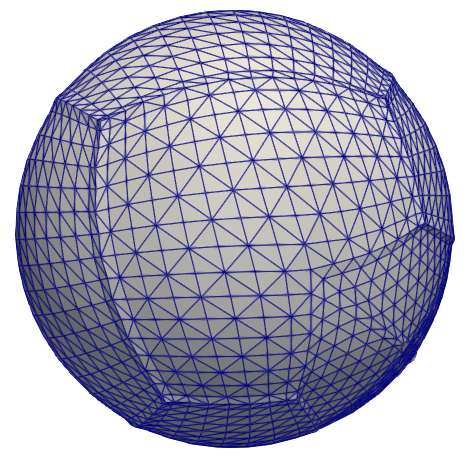}
    \end{minipage}%
    \begin{minipage}[t]{0.33\textwidth}
        \centering
        \includegraphics[width=1.5in]{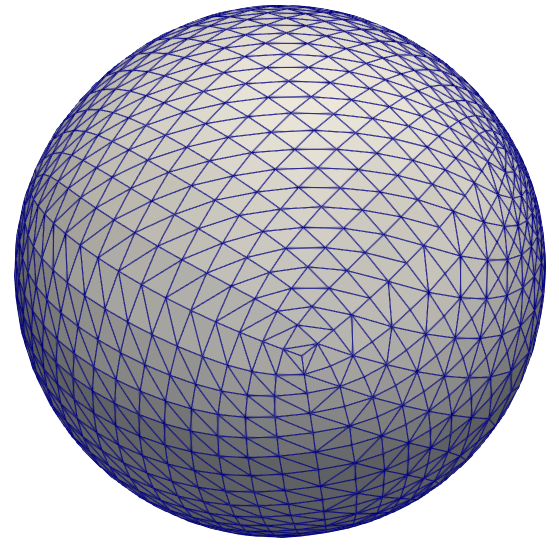}
    \end{minipage}%
    \caption{Example~3 (Case~2): initial non-convex and non-smooth shape (left), and optimized shapes produced by the \(H^1\) gradient flow (middle) and the inertial BGN--MDR method (right) for the minimization of \(\lambda_1\) in 3D.}
    \label{Eig3d1st}
\end{figure}

\begin{figure}[htbp]
    \centering
    \begin{minipage}[t]{0.33\textwidth}
        \centering
        \includegraphics[width=1.7in]{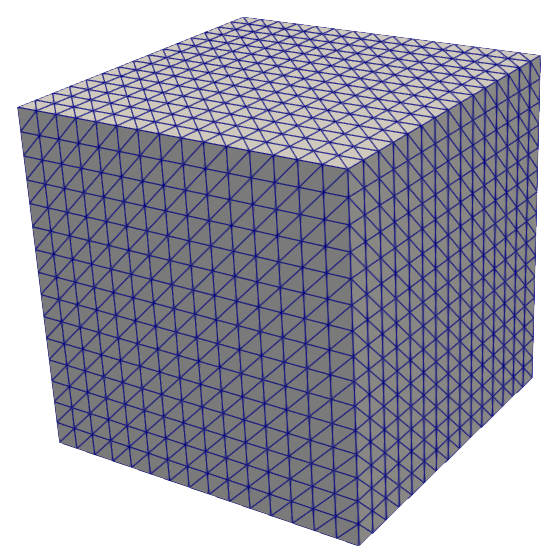}
    \end{minipage}%
    \begin{minipage}[t]{0.33\textwidth}
        \centering
        \includegraphics[width=1.9in]{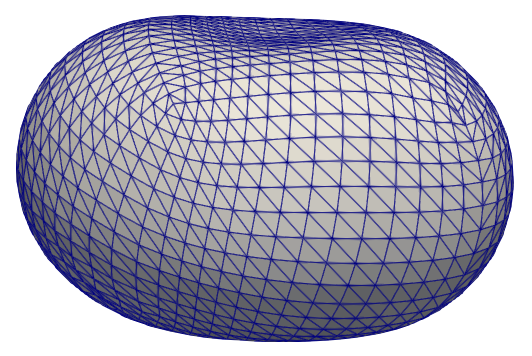}
    \end{minipage}%
    \begin{minipage}[t]{0.33\textwidth}
        \centering
        \includegraphics[width=1.4in]{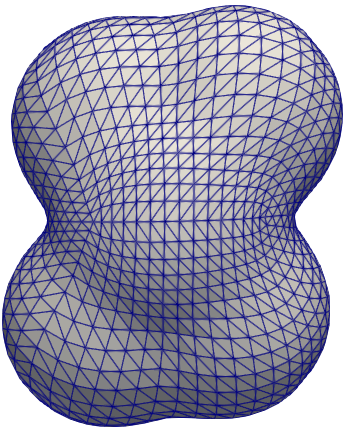}
    \end{minipage}%
    \caption{Example~3 (Case~2): initial shape (left) and optimized shapes for \(\ell=3\) (middle) and \(\ell=6\) (right).}
    \label{Eig3dOptShapes}
\end{figure}

\begin{figure}[htbp]
    \centering
    \begin{minipage}[t]{0.33\textwidth}
        \centering
        \includegraphics[width=1.8in]{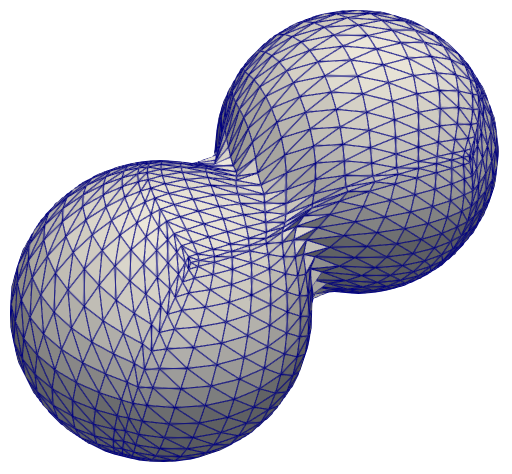}
    \end{minipage}%
    \begin{minipage}[t]{0.33\textwidth}
        \centering
        \includegraphics[width=1.8in]{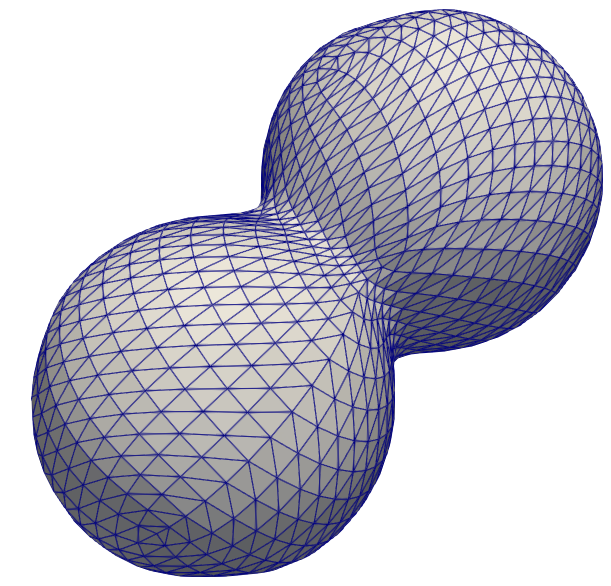}
    \end{minipage}%
    \begin{minipage}[t]{0.33\textwidth}
        \centering
        \includegraphics[width=1.8in]{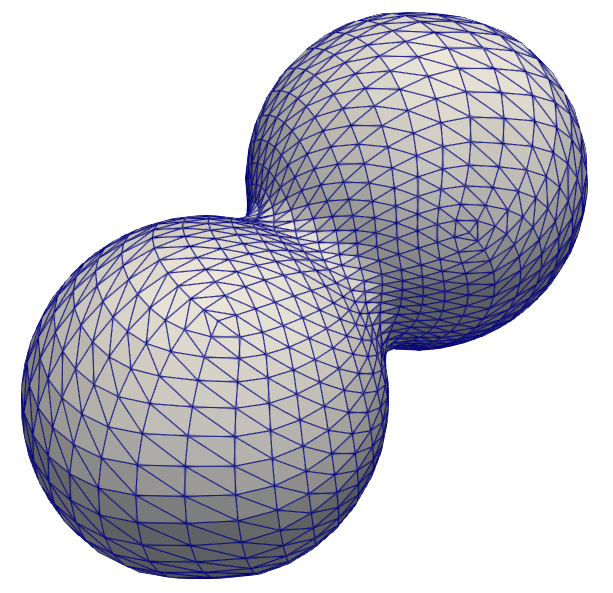}
    \end{minipage}%
    \caption{Example~3 (Case~2): optimized meshes obtained by the \(H^1\) gradient flow (left), the BGN--harmonic extension (middle), and the inertial BGN--MDR method (right) for the minimization of \(\lambda_2\) in 3D.}
    \label{Eig3d2ndH1BGN}
\end{figure}

\subsection{Example 4 (Surface hole filling)}
We present numerical simulations to demonstrate surface hole filling via the Willmore flow~\eqref{willmore-flow}. Using a car surface with two holes as a representative test case (see Fig.~\ref{Car1}, left), we compare the standard BGN scheme~\cite{BarrettGarckeNuernberg2008SISC} with the proposed inertial--MDR scheme~\eqref{numer-willmore}, with particular emphasis on the resulting mesh quality.

We set $\tau=0.01$ and $\epsilon_0=0.1$. Starting from the initial configuration
(Fig.~\ref{Car1IniOptMesh}, row~1 left), we deliberately choose incompatible initial data in the
sense that the conormal along the patch boundary does not match that of the prescribed exterior
surface. The BGN scheme then yields a numerically steady surface with noticeably inferior mesh
quality (Fig.~\ref{Car1IniOptMesh}, row~1 middle), characterized by distorted elements and a
strongly non-uniform vertex distribution. By contrast, the inertial--MDR scheme produces a surface
with a quasi-uniform triangulation and substantially improved element quality
(Fig.~\ref{Car1IniOptMesh}, row~1 right). The highlighted feature lines in
Fig.~\ref{Car1IniOptMesh} (row~2) further show that the inertial--MDR reconstruction is
considerably smoother. Finally, the normal field and Gaussian curvature of the recovered patch,
together with the Willmore-energy decay shown in Fig.~\ref{Car1} (right), demonstrate the
effectiveness and computational performance of the inertial--MDR scheme~\eqref{numer-willmore}:
the inertial term accelerates convergence to a lower Willmore energy compared with the MDR method
without inertia, while maintaining good mesh quality.

\begin{figure}[htbp]
    \centering
    \begin{minipage}[t]{0.5\textwidth}
        \centering
        \includegraphics[width=2.5in]{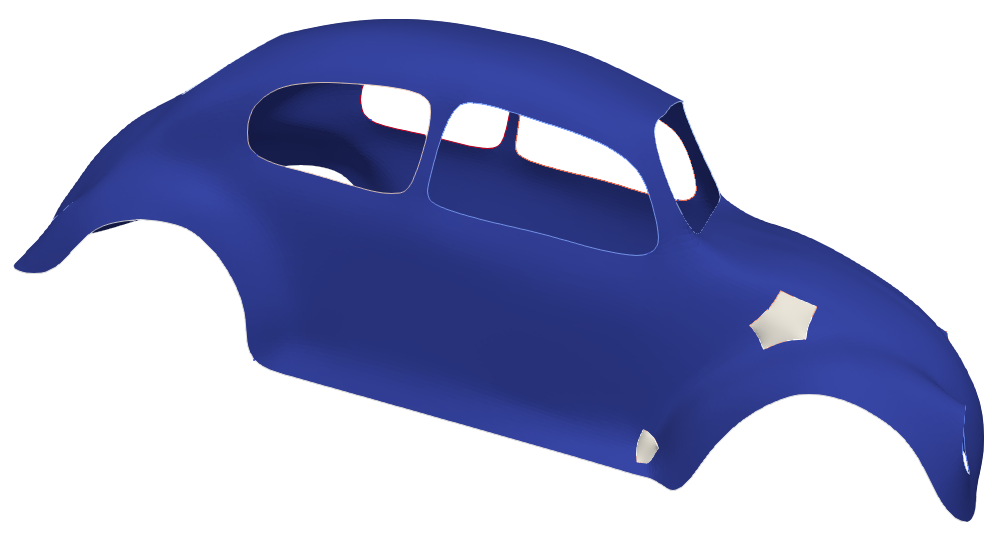}
    \end{minipage}%
    \begin{minipage}[t]{0.5\textwidth}
        \centering
        \includegraphics[width=2.2in]{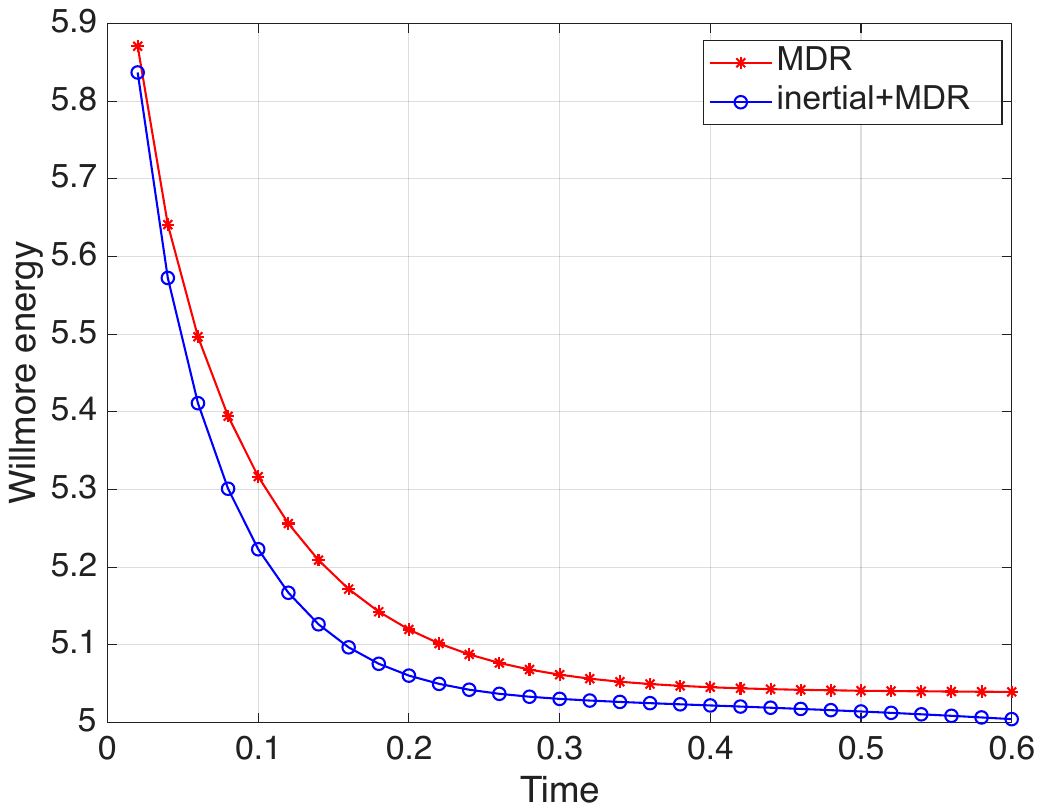}
    \end{minipage}%
    \caption{Example~4: a car surface with holes (left) and convergence history of the Willmore energy (right) computed by the inertial--MDR scheme.}
    \label{Car1}
\end{figure}

\begin{figure}[htbp]
    \centering
    \begin{minipage}[t]{0.33\textwidth}
        \centering
        \includegraphics[width=1.9in]{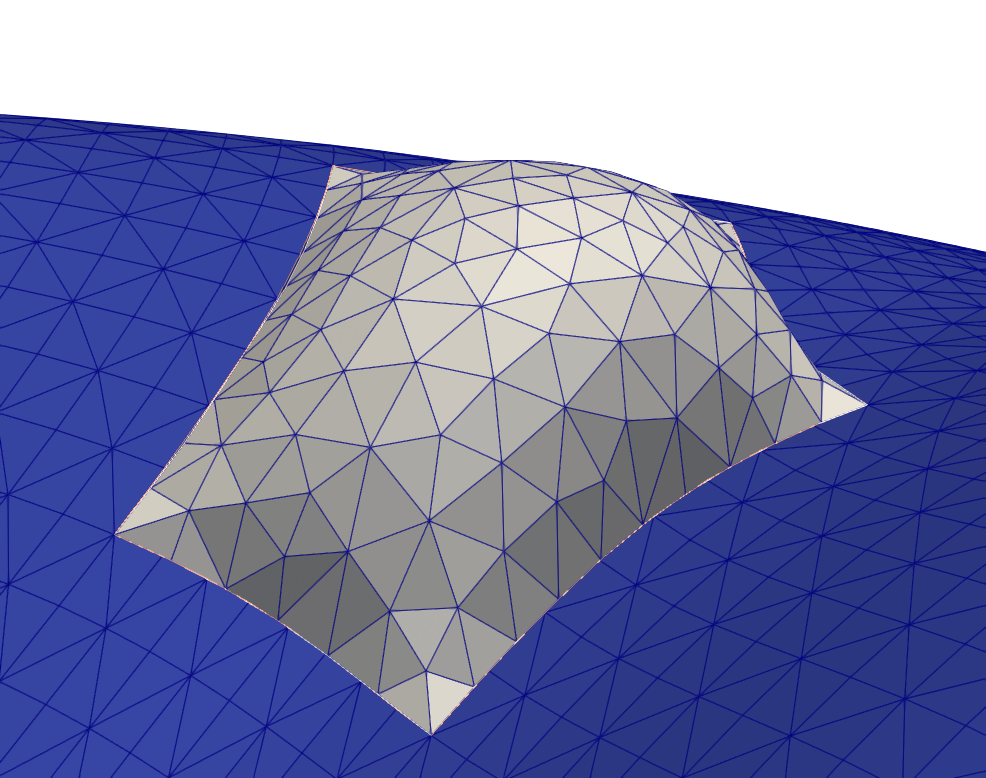}
    \end{minipage}%
    \begin{minipage}[t]{0.33\textwidth}
        \centering
        \includegraphics[width=1.9in]{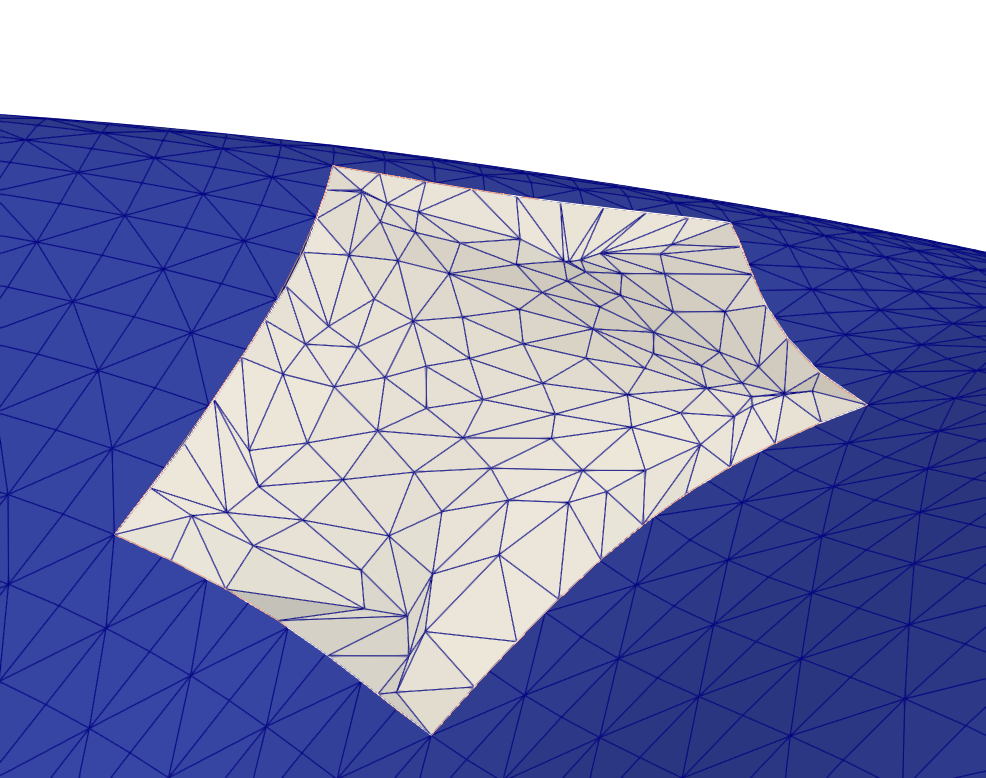}
    \end{minipage}%
    \begin{minipage}[t]{0.33\textwidth}
        \centering
        \includegraphics[width=1.9in]{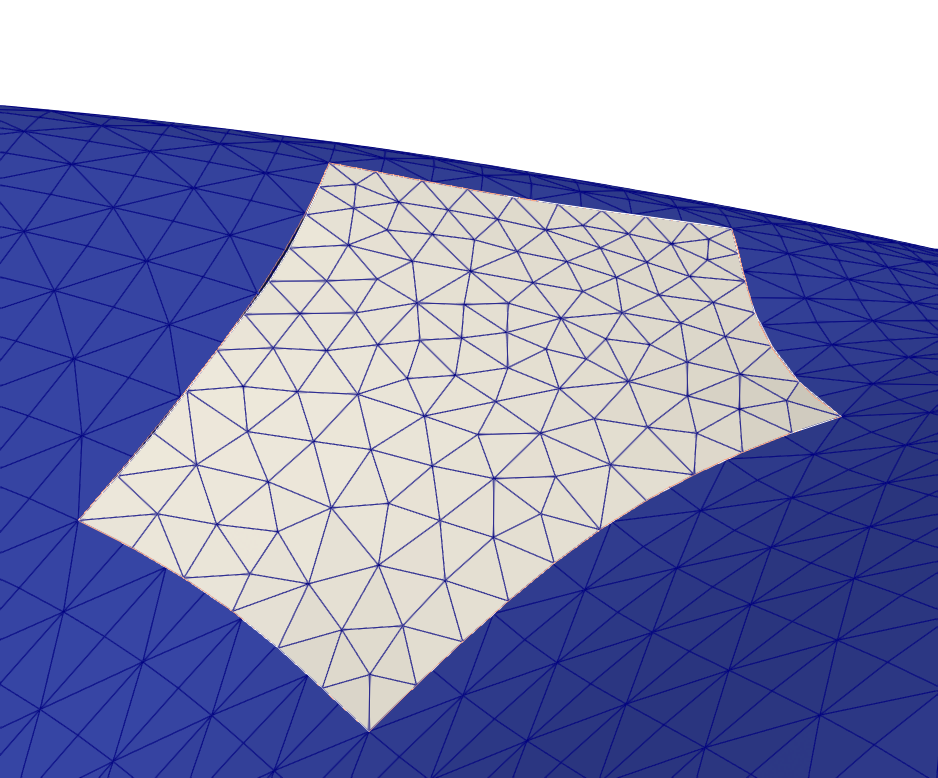}
    \end{minipage}%
    \\[0.3em]
    \begin{minipage}[t]{0.25\textwidth}
        \centering
        \includegraphics[width=1.25in]{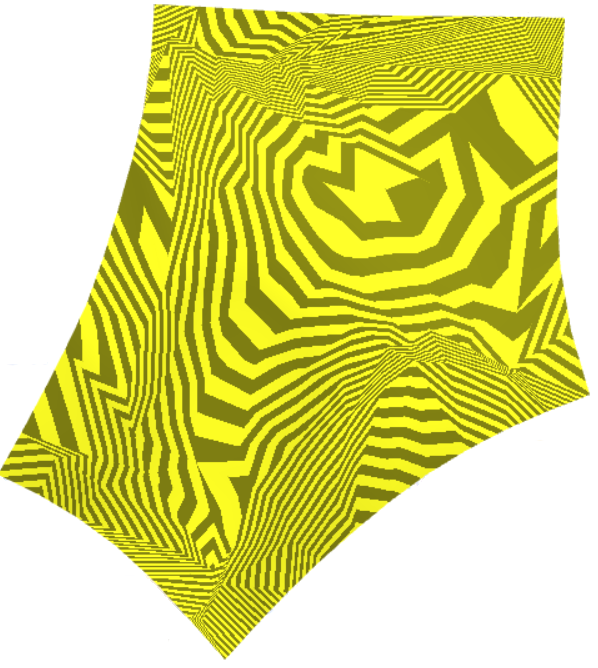}
    \end{minipage}%
    \begin{minipage}[t]{0.25\textwidth}
        \centering
        \includegraphics[width=1.25in]{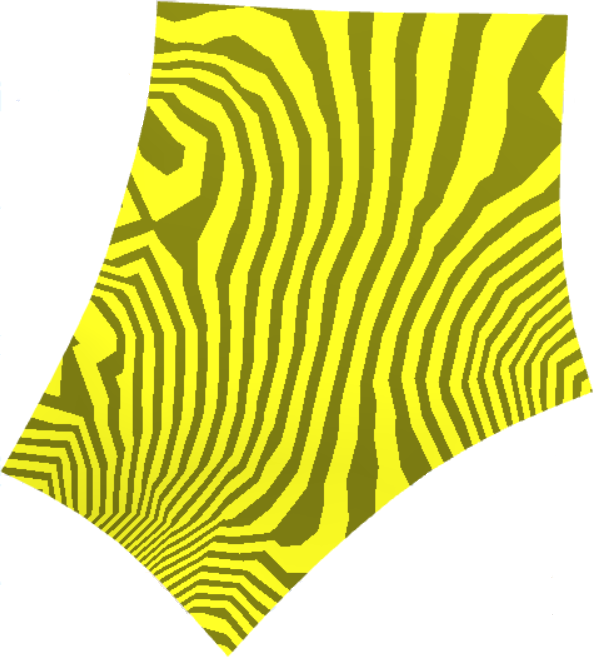}
    \end{minipage}%
    \begin{minipage}[t]{0.25\textwidth}
        \centering
        \includegraphics[width=1.5in]{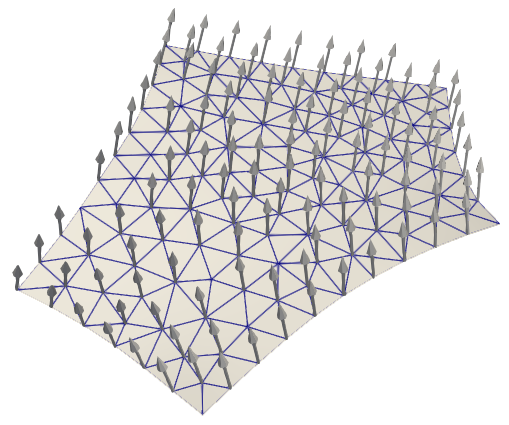}
    \end{minipage}%
    \begin{minipage}[t]{0.25\textwidth}
        \centering
        \includegraphics[width=1.5in]{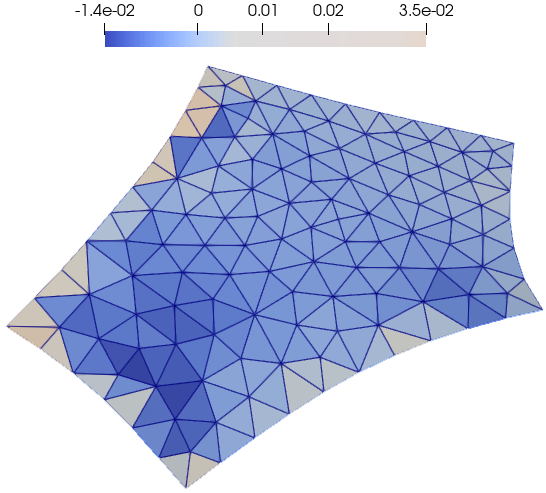}
    \end{minipage}%
    \caption{Example~4: Row~1 shows the initial patch mesh (left) and the steady-state meshes obtained by the BGN scheme (middle) and the inertial--MDR scheme (right). Row~2 shows highlighted feature lines for the BGN reconstruction (left) and the inertial--MDR reconstruction (middle left), as well as the normal field (middle right) and Gaussian curvature (right) of the inertial--MDR recovered patch.}
    \label{Car1IniOptMesh}
\end{figure}



\section{Conclusion}

In this work, we proposed an inertial--MDR framework for PDE--constrained shape optimization that
accelerates convergence and attains lower original objective values while preserving high mesh quality
without remeshing. By integrating inertial dynamics with surface--diffusion regularization, the
method computes the boundary velocity via the BGN formulation and propagates the prescribed motion
into the bulk through the MDR method. As a result, the approach remains robust for non-smooth and
non-convex initial geometries. Numerical experiments on shape reconstruction, Stokes drag
minimization, and elliptic eigenvalue minimization demonstrate faster convergence, lower original objective
values, and improved mesh quality compared with classical first--order $H^1$ shape gradient flows
and the BGN--harmonic extension approach.

We further extended the approach to an inertial--MDR scheme for Willmore--driven surface hole
filling under clamped boundary conditions. In this setting, the method accelerates convergence
toward lower objective values and preserves high mesh quality without remeshing or
reinitialization, even when initialized from incompatible data. 
Future work will focus on a
rigorous convergence analysis of the fully discrete schemes and on extending the proposed
framework to broader classes of geometric flows and shape optimization problems.



\bibliographystyle{plain}  
\bibliography{references1}

@incollection{Antunes2017,
  author = {Antunes, P. R. S. and Oudet, E.},
  title = {Numerical {R}esults for {E}xtremal {P}roblem for {E}igenvalues of the {L}aplacian},
  booktitle = {Shape {O}ptimization and {S}pectral {T}heory},
  pages = {398--411},
  publisher = {De Gruyter Open},
  address = {Warsaw},
  year = {2017}
}

@book{DelZol,
  author = {Delfour, M. and Zol\'{e}sio, J. P.},
  title = {Shapes and Geometries: Metrics, Analysis, Differential Calculus, and Optimization},
  edition = {2nd},
  publisher = {SIAM},
  address = {Philadelphia},
  year = {2011}
}

@book{SokolowskiZolesio1992,
  author = {Sokolowski, J. and Zol\'{e}sio, J. P.},
  title = {Introduction to Shape Optimization: Shape Sensitivity Analysis},
  publisher = {Springer},
  address = {Heidelberg},
  year = {1992}
}

@article{Burger2003,
  author = {Burger, M.},
  title = {A framework for the construction of level set methods for shape optimization and reconstruction},
  journal = {Interfaces and Free Boundaries},
  volume = {5},
  year = {2003},
  pages = {301--329}
}

@article{Gournay2006,
  author = {de Gournay, F.},
  title = {Velocity extension for the level-set method and multiple eigenvalues in shape optimization},
  journal = {SIAM Journal on Control and Optimization},
  volume = {45},
  year = {2006},
  pages = {343--367}
}

@article{CR2018,
  author = {Iglesias, J. A. and Sturm, K. and Wechsung, F.},
  title = {Two-{D}imensional shape optimization with nearly conformal transformations},
  journal = {SIAM Journal on Scientific Computing},
  volume = {40},
  year = {2018},
  pages = {3807--3830}
}

@article{Mullins1957,
  author = {Mullins, W. W.},
  title = {Theory of thermal grooving},
  journal = {Journal of Applied Physics},
  volume = {28},
  year = {1957},
  pages = {333--339}
}

@article{SchulzCMAM2016,
  author = {Schulz, V. and Siebenborn, M.},
  title = {Computational comparison of surface metrics for {PDE} constrained shape optimization},
  journal = {Computational Methods in Applied Mathematics},
  volume = {16},
  year = {2016},
  pages = {485--496}
}

@article{Schulz2016SICON,
  author = {Schulz, V. and Siebenborn, M. and Welker, K.},
  title = {Efficient {PDE} constrained shape optimization based on {S}teklov-{P}oincar\'{e}-type metrics},
  journal = {SIAM Journal on Optimization},
  volume = {26},
  year = {2016},
  pages = {2800--2819}
}

@article{Hiptmair2015BIT,
  author = {Hiptmair, R. and Paganini, A. and Sargheini, S.},
  title = {Comparison of approximate shape gradients},
  journal = {BIT Numerical Mathematics},
  volume = {55},
  year = {2015},
  pages = {459--485}
}

@book{Walker2015,
  author = {Walker, S. W.},
  title = {The Shapes of Things: A Practical Guide to Differential Geometry and the Shape Derivative},
  publisher = {SIAM},
  year = {2015}
}

@article{ZhuCMAME,
  author = {Zhu, S. and Gao, Z.},
  title = {Convergence analysis of mixed finite element approximations to shape gradients in the {S}tokes equation},
  journal = {Computer Methods in Applied Mechanics and Engineering},
  volume = {343},
  year = {2019},
  pages = {127--150}
}

@article{ZhuEig,
  author = {Zhu, S.},
  title = {Effective shape optimization of {L}aplace eigenvalue problems using domain expressions of {E}ulerian derivatives},
  journal = {Journal of Optimization Theory and Applications},
  volume = {176},
  year = {2018},
  pages = {17--34}
}

@book{MohammadiPironneau2010,
  author = {Mohammadi, B. and Pironneau, O.},
  title = {Applied Shape Optimization for Fluids},
  edition = {2nd},
  publisher = {Oxford University Press},
  address = {Oxford},
  year = {2010}
}

@incollection{AllaireDapognyJouve2021,
  author = {Allaire, G. and Dapogny, C. and Jouve, F.},
  title = {Shape and topology optimization},
  booktitle = {Geometric {P}artial {D}ifferential {E}quations, {P}art {II}},
  series = {Handbook of Numerical Analysis},
  volume = {22},
  pages = {1--132},
  publisher = {Elsevier},
  address = {Amsterdam},
  year = {2021}
}

@book{HaslingerMakinen2003,
  author = {Haslinger, J. and M\"{a}kinen, R.},
  title = {Introduction to Shape Optimization: Theory, Approximation, and Computation},
  publisher = {SIAM},
  address = {Philadelphia},
  year = {2003}
}

@book{HaslingerNeittaanmaki1996,
  author = {Haslinger, J. and Neittaanm\"{a}ki, P.},
  title = {Finite Element Approximation for Optimal Shape, Material and Topology Design},
  edition = {2nd},
  publisher = {John Wiley \& Sons},
  address = {Chichester},
  year = {1996}
}

@book{HaugChoiKomkov1986,
  author = {Haug, E. J. and Choi, K. K. and Komkov, V.},
  title = {Design Sensitivity Analysis of Structural Systems},
  publisher = {Academic Press},
  address = {Orlando, FL},
  year = {1986}
}

@article{DoganMorinNochettoVerani2007,
  author = {Dogan, G. and Morin, P. and Nochetto, R. H. and Verani, M.},
  title = {Discrete gradient flows for shape optimization and applications},
  journal = {Computer Methods in Applied Mechanics and Engineering},
  volume = {196},
  year = {2007},
  pages = {3898--3914}
}

@article{GongLiRao2024,
  author = {Gong, W. and Li, B. and Rao, Q.},
  title = {Convergent evolving finite element approximations of boundary evolution under shape gradient flow},
  journal = {IMA Journal of Numerical Analysis},
  volume = {44},
  year = {2024},
  pages = {2667--2697}
}

@article{GongZhu2021,
  author = {Gong, W. and Zhu, S.},
  title = {On discrete shape gradients of boundary type for {PDE}-constrained shape optimization},
  journal = {SIAM Journal on Numerical Analysis},
  volume = {59},
  number = {3},
  year = {2021},
  pages = {1510--1541}
}

@book{HenrotPierre2018,
  author = {Henrot, A. and Pierre, M.},
  title = {Shape Variation and Optimization},
  series = {EMS Tracts in Mathematics},
  volume = {28},
  year = {2018}
}

@article{Burman2017,
  author = {Burman, E. and Elfverson, D. and Hansbo, P. and Larson, M.G. and Larsson, K.},
  title = {A cut finite element method for the {B}ernoulli free boundary value problem},
  journal = {Computer Methods in Applied Mechanics and Engineering},
  volume = {317},
  year = {2017},
  pages = {598--618}
}

@article{Guo2020,
  author = {Guo, R. and Lin, T. and Lin, Y.},
  title = {Recovering elastic inclusions by shape optimization methods with immersed finite elements},
  journal = {Journal of Computational Physics},
  volume = {404},
  year = {2020},
  pages = {109123},
  note = {24 pp.}
}

@article{Dede2012,
  author = {Ded\`e, L. and Borden, M. J.  and Hughes, T. J. R. },
  title = {Isogeometric analysis for topology optimization with a phase field model},
  journal = {Archives of Computational Methods in Engineering},
  volume = {19},
  year = {2012},
  pages = {427--465}
}

@article{Wang2018,
  author = {Wang, C. and Xia, S. and Wang, X. and Qian, X.},
  title = {Isogeometric shape optimization on triangulations},
  journal = {Computer Methods in Applied Mechanics and Engineering},
  volume = {331},
  year = {2018},
  pages = {585--622}
}

@article{Morin2012,
  author = {Morin, P. and Nochetto, R. H. and Pauletti, M. S. and Verani, M.},
  title = {Adaptive {FEM} for shape optimization},
  journal = {ESAIM: Control, Optimisation and Calculus of Variations},
  volume = {18},
  year = {2012},
  pages = {1122--1149}
}

@article{Eppler2006,
  author = {Eppler, K. and Harbrecht, H.},
  title = {Coupling of {FEM} and {BEM} in shape optimization},
  journal = {Numerische Mathematik},
  volume = {104},
  year = {2006},
  pages = {47--68}
}

@article{Ma2022,
  author = {Ma, C. and Zhang, Q. and Zheng, W.},
  title = {A fourth-order unfitted characteristic {FEM} for solving the advection--diffusion equation on time-varying domains},
  journal = {SIAM Journal on Numerical Analysis},
  volume = {60},
  year = {2022},
  pages = {2203--2224}
}

@article{Boffi2004,
  author = {Boffi, D. and Gastaldi, L.},
  title = {Stability and geometric conservation laws for arbitrary {L}agrangian--{E}ulerian formulations},
  journal = {Computer Methods in Applied Mechanics and Engineering},
  volume = {193},
  year = {2004},
  pages = {4717--4739}
}

@article{Badia2006,
  author = {Badia, S. and Codina, R.},
  title = {Analysis of a stabilized finite element approximation of the transient convection--diffusion equation using an arbitrary {L}agrangian--{E}ulerian framework},
  journal = {SIAM Journal on Numerical Analysis},
  volume = {44},
  year = {2006},
  pages = {2159--2197}
}

@article{ElliottRanner2021,
  author = {Elliott, C. M. and Ranner, T.},
  title = {A unified theory for continuous-in-time evolving finite element space approximations to partial differential equations in evolving domains},
  journal = {IMA Journal of Numerical Analysis},
  volume = {41},
  number = {3},
  year = {2021},
  pages = {1696--1845}
}

@article{Edelmann2022,
  author = {Edelmann, D.},
  title = {Finite element analysis for a diffusion equation on a harmonically evolving domain},
  journal = {IMA Journal of Numerical Analysis},
  volume = {42},
  year = {2022},
  pages = {1866--1901}
}

@article{Gastaldi2001,
  author = {Gastaldi, L.},
  title = {A priori error estimates for the arbitrary {L}agrangian {E}ulerian formulation with finite elements},
  journal = {East--West Journal of Numerical Mathematics},
  volume = {9},
  year = {2001},
  pages = {123--156}
}

@phdthesis{Nobile2001,
  author = {Nobile, F.},
  title = {Numerical approximation of fluid-structure interaction problems with application to haemodynamics},
  school = {\'Ecole Polytechnique F\'ed\'erale de Lausanne},
  address = {Switzerland},
  year = {2001}
}

@article{ElliottStyles2012,
  author = {Elliott, C. M. and Styles, V.},
  title = {An arbitrary {L}agrangian--{E}ulerian {ESFEM} for solving {PDE}s on evolving surfaces},
  journal = {Milan Journal of Mathematics},
  volume = {80},
  year = {2012},
  pages = {469--501}
}

@article{ElliottVenkataraman2015,
  author = {Elliott, C. M. and Venkataraman, C.},
  title = {Error analysis for an arbitrary {L}agrangian--{E}ulerian evolving surface {FEM}},
  journal = {Numerical Methods for Partial Differential Equations},
  volume = {31},
  year = {2015},
  pages = {459--499}
}

@article{KovacsGuerra2018,
  author = {Kov\'acs, B. and Guerra, C. A. Power},
  title = {Higher order time discretizations with arbitrary {L}agrangian--{E}ulerian finite elements for parabolic problems on evolving surfaces},
  journal = {IMA Journal of Numerical Analysis},
  volume = {38},
  year = {2018},
  pages = {460--494}
}

@article{2ndFlowSISC2025,
  author = {Chen, H. and Dong, G. and Iglesias, J. A.  and Liu, W. and Xie, Z.},
  title = {Second-order flows for approaching stationary points of a class of nonconvex energies via convex-splitting schemes},
  journal = {SIAM Journal on Scientific Computing},
  volume = {47},
  year = {2025},
  pages = {A1604--1627}
}

@article{Boyd2016,
  author = {Su, W. and Boyd, S. and Cand\'{e}s, E. J.},
  title = {A differential equation for modeling {N}esterov's accelerated gradient method: {T}heory and insights},
  journal = {Journal of Machine Learning Research},
  volume = {17},
  year = {2016},
  pages = {153}
}

@article{Edvardsson2012,
  author = {Edvardsson, S. and Gulliksson, M. and Persson, J.},
  title = {The dynamical functional particle method: An approach for boundary value problems},
  journal = {Journal of Applied Mechanics},
  volume = {79},
  year = {2012},
  pages = {021012}
}

@article{Nesterov1983,
  author = {Nesterov, Y.},
  title = {A method of solving a convex programming problem with convergence rate $O(1/k^2)$},
  journal = {Soviet Mathematics Doklady},
  volume = {27},
  year = {1983},
  pages = {372--376}
}

@article{AttouchMP2018,
  author = {Attouch, H. and Chbani, Z. and Peypouquet, J. and Redont, P.},
  title = {Fast convergence of inertial dynamics and algorithms with asymptotic vanishing viscosity},
  journal = {Mathematical Programming},
  volume = {168},
  year = {2018}
}

@article{ChenDong2023JCP,
  author = {Chen, H. and Dong, G. and Liu, W. and Xie, Z.},
  title = {Second-order flows for computing the ground states of rotating {B}ose-{E}instein condensates},
  journal = {Journal of Computational Physics},
  volume = {475},
  year = {2023},
  pages = {111872}
}

@article{BaoZhao2021,
  author  = {Bao, W. and Zhao, Q.},
  title   = {A structure-preserving parametric finite element method for surface diffusion},
  journal = {SIAM Journal on Numerical Analysis},
  year    = {2021},
  volume  = {59},
  pages   = {2775--2799},
}

@article{BaenschMorinNochetto2005,
  author  = {B{\"a}nsch, E. and Morin, P. and Nochetto, R. H.},
  title   = {A finite element method for surface diffusion: {T}he parametric case},
  journal = {Journal of Computational Physics},
  year    = {2005},
  volume  = {203},
  pages   = {321--343},
}

@article{BarrettGarckeNuernberg2007,
  author  = {Barrett, J. W. and Garcke, H. and N{\"u}rnberg, R.},
  title   = {A parametric finite element method for fourth order geometric evolution equations},
  journal = {Journal of Computational Physics},
  year    = {2007},
  volume  = {222},
  pages   = {441--467}
}

@article{BarrettGarckeNuernberg2008JCP,
  author  = {Barrett, J. W. and Garcke, H. and N{\"u}rnberg, R.},
  title   = {On the parametric finite element approximation of evolving hypersurfaces in $\mathbb{R}^3$},
  journal = {Journal of Computational Physics},
  year    = {2008},
  volume  = {227},
  pages   = {4281--4307}
}

@article{BarrettGarckeNuernberg2008SISC,
  author  = {Barrett, J. W. and Garcke, H. and N{\"u}rnberg, R.},
  title   = {Parametric approximation of {W}illmore flow and related geometric evolution equations},
  journal = {SIAM Journal on Scientific Computing},
  year    = {2008},
  volume  = {31},
  pages   = {225--253}
}

@article{LiBao2021,
  author  = {Li, Y. and Bao, W.},
  title   = {An energy-stable parametric finite element method for anisotropic surface diffusion},
  journal = {Journal of Computational Physics},
  year    = {2021},
  volume  = {446},
  pages   = {110658}
}

@article{BaiHuLi2024,
  author  = {Bai, G. and Hu, J. and Li, B.},
  title   = {A convergent evolving finite element method with artificial tangential motion for surface evolution under a prescribed velocity field},
  journal = {SIAM Journal on Numerical Analysis},
  year    = {2024},
  volume  = {62},
  pages   = {2172--2195}
}

@article{LiXiaYang2023,
  author  = {Li, B. and Xia, Y. and Yang, Z.},
  title   = {Optimal convergence of arbitrary {L}agrangian--{E}ulerian iso-parametric finite element methods for parabolic equations in an evolving domain},
  journal = {IMA Journal of Numerical Analysis},
  year    = {2023},
  volume  = {43},
  pages   = {501--534}
}

@misc{garcke2025energystableparametricfiniteelement,
      title={An energy-stable parametric finite element method for {W}illmore flow with normal-tangential velocity splitting}, 
      author={Garcke, H. and Nürnberg, R. and Zhao, Q.},
      year={2025},
      eprint={2507.00193},
      archivePrefix={arXiv},
      primaryClass={math.NA},
      url={https://arxiv.org/abs/2507.00193}, 
}

@article{Gao2024_rotation,
  title = {A {S}tabilized arbitrary {L}agrangian-{E}ulerian sliding interface method for fluid-structure
  interaction with a rotating rigid structure},
  author = {Gao, Y. and Hu, J. and Li, B.},
  date = {},
  volume = {},
  number = {},
  pages = {},
  doi = {},
  langid = {english},
  journal = {SIAM J. Sci. Comput.},
  year = {2024},
  dimensions = {true},
}

@book{hoschek1996fundamentals,
  title={Fundamentals of Computer Aided Geometric Design},
  author={Hoschek, J. and Lasser, D.},
  isbn={9781568810072},
  lccn={92038728},
  series={Ak Peters Series},
  url={https://books.google.com.hk/books?id=kYG17apHcrYC},
  year={1996},
  publisher={Taylor \& Francis}
}

@book{carmo,
  author    = {do Carmo, M. P.},
  title     = {Differential Geometry of Curves and Surfaces},
  publisher = {Prentice--Hall},
  address   = {Englewood Cliffs, NJ},
  year      = {1976},
  isbn      = {0132125897},
}

@book{JohnLEE,
  author    = {Lee, J. M.},
  title     = {Introduction to Topological Manifolds},
  series    = {Graduate Texts in Mathematics},
  volume    = {202},
  edition   = {2},
  publisher = {Springer},
  year      = {2011},
  isbn      = {978-1-4419-7939-1},
  isbn      = {978-1-4419-7940-7},
  doi       = {10.1007/978-1-4419-7940-7},
}

@book{missy,
  author    = {Massey, W. S.},
  title     = {A Basic Course in Algebraic Topology},
  series    = {Graduate Texts in Mathematics},
  volume    = {127},
  edition   = {1},
  publisher = {Springer},
  year      = {1991},
  isbn      = {978-0-387-97430-9},
  isbn      = {978-1-4939-9063-4},
  doi       = {10.1007/978-1-4939-9063-4},
}

\end{document}